\documentclass[a4paper,12pt]{amsart}
\usepackage{amssymb}

\theoremstyle{plain}
   \newtheorem{thm}{Theorem}[section]
   \newtheorem{lem}[thm]{Lemma}
   
   \newtheorem{cor}[thm]{Corollary}
   \newtheorem{prop}[thm]{Proposition}

\theoremstyle{definition}
   \newtheorem{defn}[thm]{Definition}

\theoremstyle{remark}
   \newtheorem{rem}[thm]{Remark}

\numberwithin{equation}{section}

\DeclareMathOperator{\vol}{vol}
\DeclareMathOperator{\Ric}{Ric}
\DeclareMathOperator{\BG}{BG}
\DeclareMathOperator{\CD}{CD}
\DeclareMathOperator{\Hess}{Hess}
\DeclareMathOperator{\divv}{div}
\DeclareMathOperator{\dbl}{dbl}

\DeclareMathOperator{\supp}{supp}
\DeclareMathOperator{\Lip}{Lip}
\DeclareMathOperator{\loc}{loc}

\newcommand{\Hm}{\mathcal{H}}
\newcommand{\Cut}{\text{\rm Cut}}
\newcommand{\der}[2]{\frac{d #1}{d #2}}

\newcommand{\field}[1]{\mathbb{#1}}

\newcommand{\R}{\field{R}}

\pagestyle{plain}

\begin{document}

\title{A topological splitting theorem\\ for weighted Alexandrov spaces}


\begin{abstract}
   Under an infinitesimal version of the Bishop-Gromov relative volume
   comparison condition for a measure on an Alexandrov space,
   we prove a topological splitting theorem of Cheeger-Gromoll type.
   As a corollary, we prove an isometric splitting theorem
   for Riemannian manifolds with singularities of nonnegative
   (Bakry-Emery) Ricci curvature.
\end{abstract}

\author{Kazuhiro Kuwae}
\address{Department of Mathematics and Engineering\endgraf
   Graduate School of Science and Technology\endgraf
   Kumamoto University\endgraf
   Kumamoto, 860-8555\endgraf
   JAPAN}
\email{kuwae@gpo.kumamoto-u.ac.jp}

\author{Takashi Shioya}
\address{Mathematical Institute\endgraf
   Tohoku University\endgraf
   Sendai 980-8578\endgraf
   JAPAN}
\email{shioya@math.tohoku.ac.jp}

\subjclass[2000]{Primary 53C20; Secondary 53C21, 53C23}


\keywords{splitting theorem, Ricci curvature, Bishop-Gromov inequality}

\thanks{The authors are partially supported by a Grant-in-Aid
   for Scientific Research No.~19540220 and 20540058 from
   the Japan Society for the Promotion of Science}

\maketitle

\section{Introduction} \label{sec:intro}

A main purpose of this paper is to prove a splitting theorem
of Cheeger-Gromoll type for singular spaces.
Since it is impossible to define the Ricci curvature tensor
on Alexandrov spaces,
we consider an infinitesimal version of the Bishop-Gromov volume
comparison condition as a candidate of the conditions of the Ricci
curvature bounded below.
Under the volume comparison condition for a measure
on an Alexandrov space,
we prove a topological splitting theorem.
As a corollary, we prove an isometric splitting theorem
for an Alexandrov space whose regular part is a smooth Riemannian
manifold of nonnegative (Bakry-Emery) Ricci curvature.

Let us present the volume comparison condition.
For a real number $\kappa$, we set
\[
s_\kappa(r) :=
\begin{cases}
   \sin(\sqrt{\kappa}r)/\sqrt{\kappa}  &\text{if $\kappa > 0$},\\
   r &\text{if $\kappa = 0$},\\
   \sinh(\sqrt{|\kappa|}r)/\sqrt{|\kappa|} &\text{if $\kappa < 0$}.
\end{cases}
\]
The function $s_\kappa$ is the solution of the Jacobi equation
$s_\kappa''(r) + \kappa s_\kappa(r) = 0$ with initial condition
$s_\kappa(0) = 0$, $s_\kappa'(0) = 1$.

Let $M$ be an Alexandrov space of curvature bounded from below locally
and set $r_p(x) := d(p,x)$ for $p,x \in M$,
where $d$ is the distance function.
For $p \in M$ and $0 < t \le 1$,
we define a subset $W_{p,t} \subset M$ and a map
$\Phi_{p,t} : W_{p,t} \to M$ as follows.
We first set $\Phi_{p,t}(p) := p \in W_{p,t}$.
A point $x$ ($\neq p$) belongs to $W_{p,t}$
if and only if there exists $y \in M$ such that
$x \in py$ and $r_p(x) : r_p(y) = t:1$, where
$py$ is a minimal geodesic from $p$ to $y$.
Since a geodesic does not branch on an Alexandrov space,
for a given point $x \in W_{p,t}$ such a point $y$ is unique and we set
$\Phi_{p,t}(x) := y$.
The triangle comparison condition
implies the local Lipschitz continuity of the map
$\Phi_{p,t} : W_{p,t} \to M$.
We call $\Phi_{p,t}$ the \emph{radial expansion map}.

Let $\mu$ be a positive Radon measure on $M$ with full support,
$N \ge 1$ a real number, and $\Omega \subset M$ a subset.
The following is an infinitesimal version of
the Bishop-Gromov volume comparison condition for $\mu$
corresponding to the condition of the lower Ricci curvature bound
$\Ric \ge (N-1)\kappa$ with dimension $N$.

\medskip
{\bf Infinitesimal Bishop-Gromov Condition $\BG(\kappa,N)$ for $\mu$
   on $\Omega$:}
\ For any $p \in M$, $t \in (\,0,1\,]$, and any measurable function
$f : M \to [\,0,+\infty\,)$ with the property ($*$) below,
we have
\[
\int_{W_{p,t}} f\circ\Phi_{p,t}(y)\;d\mu(y)
\ge \int_M \frac{t\,s_\kappa(t\,r_p(x))^{N-1}}{s_\kappa(r_p(x))^{N-1}}\,
f(x)\;d\mu(x).
\]
\begin{itemize}
\item[($*$)] $f$ has a compact support in $\Omega \setminus \{p\}$ and,
   if $\kappa > 0$, the support is contained in
   the open metric ball $B(p,\pi/\sqrt{\kappa})$ centered at $p$ of
   radius $\pi/\sqrt{\kappa}$.
\end{itemize}
We say that $\mu$ satisfies $\BG(\kappa,N)$ if it satisfies
$\BG(\kappa,N)$ on $\Omega = M$.

\medskip
For an $n$-dimensional complete
Riemannian manifold, the Riemannian volume measure satisfies $\BG 
(\kappa,n)$
if and only if the Ricci curvature satisfies $\Ric \ge (n-1)\kappa$
(see \cite[Theorem 3.2]{Ota:mcp} for the `only if' part).
We see some studies on similar (or same) conditions to $\BG(\kappa,N) 
$ in
\cite{CC:strRicI,Gr:greenbook,St:heat,KwSy:geneMCP,KwSy:sobmet,
Rm:Poincare,Ota:mcp,Wt:localcut,KwSy:BG-Alex} etc.
$\BG(\kappa,N)$ is sometimes called the Measure Contraction Property
and is weaker than the curvature-dimension (or lower $N$-Ricci  
curvature)
condition $\CD((N-1)\kappa,N)$ introduced
by Sturm \cite{St:geom} and Lott-Villani \cite{LV:Ricmm}
in terms of mass transportation.
For a measure on an Alexandrov space, $\BG(\kappa,N)$ is equivalent to
the $(\kappa/(N-1),N)$-measure contraction property introduced
by Ohta \cite{Ota:mcp}.
For an $n$-dimensional Alexandrov space of curvature $\ge \kappa$,
the $n$-dimensional Hausdorff measure $\Hm^n$ on $M$
satisfies $\BG(\kappa,n)$ (see \cite{KwSy:BG-Alex}).
Note that we do not necessarily assume $M$ to be of curvature
\emph{uniformly} bounded below.
We assume the Alexandrov curvature condition just for the local  
regularity
of the space.
If an Alexandrov space $M$ has a measure $\mu$
satisfying $\BG(\kappa,N)$, then
the dimension of $M$ is less than or equal to $N$
\cite[Corollary 2.7]{Ota:mcp}.
The infinitesimal Bishop-Gromov condition is stable under
the measured Gromov-Hausdorff convergence
(cf.~\cite[Appendix 2]{CC:strRicI} and \cite[\S 5. I$_+$] 
{Gr:greenbook}).

One of our main theorems is stated as follows.

\begin{thm}[Topological Splitting Theorem] \label{thm:splitting} Let
   $M$ be an Alexandrov space of curvature bounded from below locally
   and $\mu$ a positive Radon measure on $M$ with full support.
   Assume that, for any
   relatively compact open subset $\Omega \subset M$,
   there exists a real number $N_\Omega \ge 1$
   such that $\mu$ satisfies $\BG(0,N_\Omega)$ on $\Omega$.
   If, in addition, $M$ contains a straight line,
   then $M$ is homeomorphic to $M' \times \R$ for some metric space  
$M'$.
\end{thm}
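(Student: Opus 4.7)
The plan is to adapt the Cheeger--Gromoll argument to this singular, measured setting. Fix a straight line $\gamma:\R \to M$ and define the two Busemann functions
\[
b^\pm(x) := \lim_{t\to+\infty}\bigl(t - d(x,\gamma(\pm t))\bigr),
\]
which are $1$-Lipschitz on $M$ and satisfy $b^+(\gamma(s)) = s$, $b^-(\gamma(s)) = -s$, $b^+ + b^- \ge 0$ everywhere by the triangle inequality. The first task is to upgrade $\BG(0,N_\Omega)$ into a Laplacian-type comparison for the distance function $r_p$: by testing the defining inequality against indicator functions of thin annuli centered at $p = \gamma(\pm t)$, differentiating in the thickness of the annulus, and using the standard Alexandrov first variation formula, one obtains a weak distributional inequality $\Delta_\mu r_p \le (N_\Omega-1)/r_p$ on $\Omega\setminus\{p\}$. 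Passing to the limit $t \to +\infty$ along the two rays then gives that $b^+$ and $b^-$ are weakly $\mu$-superharmonic on $M$.

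The second task is to invoke a strong maximum principle. The function $u := b^+ + b^-$ is nonnegative, vanishes identically along $\gamma$, and is weakly $\mu$-superharmonic. Since $\mu$ has full support and $\BG(0,N_\Omega)$ holds locally, a Harnack/maximum principle of the type developed for measures satisfying a local Bishop--Gromov inequality forces $u \equiv 0$. Consequently $b := b^+ = -b^-$ is weakly $\mu$-harmonic, and from the equality case $b^+(x) + b^-(x) = 0$ one extracts: through every $x \in M$ there passes a straight line $\gamma_x:\R \to M$ with $b(\gamma_x(s)) = b(x) + s$, realized by concatenating an asymptotic ray to $\gamma^+$ from $x$ with one to $\gamma^-$. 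Non-branching of Alexandrov geodesics makes $\gamma_x$ unique and makes $b$ strictly affine along each $\gamma_x$ with unit speed.

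The remaining task is to convert this family of parallel lines into a global homeomorphism. Set $M' := b^{-1}(0)$ and define $\Psi : M' \times \R \to M$ by $\Psi(x_0, s) := \gamma_{x_0}(s)$. Bijectivity is immediate from uniqueness of $\gamma_{x_0}$ and the fact that $b$ is strictly affine along it. Continuity of $\Psi$ follows from continuous dependence of asymptotic rays on their starting point (Arzel\`a--Ascoli plus uniqueness). The main obstacle will be continuity of $\Psi^{-1}$, i.e.\ showing that the footpoint $x_0$ and the parameter $s = b(x)$ depend continuously on $x$; this is the delicate step because, without an isometric product structure, one must argue purely from metric rigidity. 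I would handle it by establishing that every pair of parallel lines $\gamma_x, \gamma_y$ bounds a flat strip in the comparison sense, so that $d(\gamma_x(s), \gamma_y(s)) \le d(x,y)$; this equicontinuity of the flow $x \mapsto \gamma_x(s)$ provides the needed inverse continuity and completes the topological splitting.
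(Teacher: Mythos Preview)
Your overall strategy---Laplacian comparison from $\BG(0,N_\Omega)$, Busemann functions, strong maximum principle, then organizing the resulting bi-asymptotic lines into a product---is exactly the paper's. Two corrections are needed.

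First, the signs are off: the triangle inequality gives $b^+ + b^- \le 0$, not $\ge 0$, and under the convention $\Delta_\mu r_p \le (N_\Omega-1)/r_p$ the Busemann functions come out \emph{sub}harmonic, not superharmonic. The corrected argument is that $b^+ + b^-$ is subharmonic, nonpositive, and attains its maximum $0$ along $\gamma$, hence vanishes identically. This is pure bookkeeping and does not affect the strategy.

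Second, and this is a genuine gap, your flat-strip argument for continuity of $\Psi^{-1}$ is not available here. The hypotheses only say that $M$ is an Alexandrov space with some \emph{local} lower curvature bound, which may well be negative; the condition $\BG(0,N_\Omega)$ is a Ricci-type constraint on the measure $\mu$ and says nothing about sectional curvature. There is no flat-strip theorem in this generality, and the inequality $d(\gamma_x(s),\gamma_y(s)) \le d(x,y)$ can fail (think of a space that is hyperbolic in some region). The paper sidesteps this entirely: once uniqueness of the bi-asymptotic line through each point is in hand (which you already argued via non-branching), continuity of $\Psi^{-1}$ follows from the \emph{same} Arzel\`a--Ascoli device you invoked for $\Psi$. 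If $y_n \to y$ in $M$, any subsequential limit of the lines $\sigma_{y_n}$ is a line through $y$ bi-asymptotic to $\gamma$, hence equals $\sigma_y$ by uniqueness; in particular the footpoints on $M' = b^{-1}(0)$ converge. No metric rigidity beyond uniqueness of the line is required, so the step you flagged as delicate is in fact no harder than the forward direction.
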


Note that $\BG(0,N_\Omega)$ in this theorem can be replaced with
the curvature dimension condition.

This theorem is new even if $M$ is a complete Riemannian manifold.
We do not know if the isometric splitting in the theorem is true,
i.e., if $M$ is isometric to $M' \times \R$ for some Alexandrov space
$M'$, even in the case where $\mu$ is the $n$-dimensional Hausdorff  
measure.
If we replace `$\BG(0,N_\Omega)$' with `curvature $\ge 0$', then the  
isometric
splitting was proved by Milka \cite{Mk:line}, Grove-Petersen
\cite{GP:excess}
and Yamaguchi \cite{Ym:collapsing},
as a generalization of the well-known Toponogov splitting theorem.
For $n$-dimensional Riemannian manifolds with Riemannian volume measure,
$\BG(0,n)$ is equivalent to
$\Ric \ge 0$
and the isometric splitting under $\Ric \ge 0$ was proved by
Cheeger-Gromoll \cite{CG:split}.
In our case, we do not have the Weitzenb\"ock formula, so that
we cannot obtain the isometric splitting at present.

A rough idea of our proof came from that of Cheeger-Gromoll
\cite{CG:split}.
One of essential points in our proof is to prove
a generalized version of the Laplacian comparison theorem
(Theorem \ref{thm:LapComp}), where our discussion
is much different from the Riemannian case.
We also prove the maximum principle for subharmonic functions,
by using the result of the first named author \cite{Kw:maxprinsemi}
and Cheeger's theory \cite{Ch:metmeas}.

If the metric of $M$ has enough smooth part,
we can prove the isometric splitting.  For that, we consider the  
following.

\begin{defn}[Singular Riemannian space]
   $M$ is called a \emph{singular Riemannian space} if
   the following (1),(2) and (3) are satisfied.
   \begin{enumerate}
   \item $M$ is an Alexandrov space of curvature bounded below locally.
   \item The set $S_M$ of singular points is a closed set in $M$.
   \item The set $M \setminus S_M$ of non-singular points is an
     {\rm(}incomplete{\rm)} $C^2$ Riemannian manifold.
   \end{enumerate}
\end{defn}

Note that any complete Riemannian orbifold is a singular
Riemannian space.

\begin{cor} \label{cor:splitting}
   Let $M$ be an $n$-dimensional singular Riemannian space.
   If the Ricci curvature satisfies $\Ric \ge 0$ on $M \setminus S_M$,
   then $M$ is isometric to $M' \times \R^k$,
   where $M'$ is a singular Riemannian space containing no straight line
   and $k := n-\dim M'$.
\end{cor}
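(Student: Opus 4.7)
My plan is to apply the topological splitting theorem (Theorem \ref{thm:splitting}) to obtain a homeomorphic product, then use the smoothness of $M\setminus S_M$ together with Bochner's identity to promote this to an isometric splitting, and finally iterate.

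\emph{Step 1: reduction to Theorem \ref{thm:splitting}.} I take $\mu=\Hm^n$. Since $M\setminus S_M$ is a smooth Riemannian manifold with $\Ric\ge 0$, the classical Riemannian Bishop--Gromov inequality shows that $\Hm^n$ satisfies $\BG(0,n)$ whenever the test function $f$ has compact support in $\Omega\setminus(S_M\cup\{p\})$. Because $S_M$ is closed, $\Hm^n$-negligible, and $\Phi_{p,t}$ is locally Lipschitz, a truncation/approximation argument extends this inequality to all admissible $f$. Hence $\Hm^n$ satisfies $\BG(0,n)$ on every relatively compact open $\Omega\subset M$, and Theorem \ref{thm:splitting} yields a homeomorphism $M\cong M_1\times\R$.

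\emph{Step 2: isometric rigidity from Bochner.} For the given line $\gamma$, set $b^\pm(x):=\lim_{t\to\infty}\bigl(t-d(x,\gamma(\pm t))\bigr)$. Applying the Laplacian comparison Theorem \ref{thm:LapComp} to the distance functions from $\gamma(\pm t)$ in the limit $t\to\infty$ shows that $b^\pm$ is weakly subharmonic on $M$, while the triangle inequality gives $b^++b^-\le 0$ with equality along $\gamma$. The maximum principle for subharmonic functions discussed in the introduction then forces $b^++b^-\equiv 0$, so $b:=b^+=-b^-$ is weakly harmonic on $M$; by elliptic regularity it is $C^2$ on the smooth part. Since $b$ is $1$-Lipschitz with $|\nabla b|=1$ along asymptotic rays, Bochner's formula
\[
\tfrac12\Delta|\nabla b|^2=|\Hess b|^2+\Ric(\nabla b,\nabla b)
\]
combined with $\Ric\ge 0$ and the maximum principle applied to $|\nabla b|^2$ forces $|\nabla b|\equiv 1$, $\Hess b\equiv 0$, and $\Ric(\nabla b,\nabla b)\equiv 0$ on $M\setminus S_M$. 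Consequently $\nabla b$ is a parallel unit vector field on the smooth part, and its flow $\phi_t$ supplies a local isometric splitting $U\cong(U\cap b^{-1}(c))\times\R$ near every regular point.

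\emph{Step 3: gluing and iteration.} The topological product $M\cong M_1\times\R$ from Step 1 provides a global $\R$-foliation of $M$. On the dense regular set this must coincide with the orbit foliation of $\phi_t$, because through any regular point there is a unique line asymptotic to $\gamma$. Therefore the local isometric splittings glue to an isometric splitting of $M\setminus S_M$, and by continuity of the distance function this extends to a global isometry $M\cong b^{-1}(0)\times\R$, where $b^{-1}(0)$ is an $(n-1)$-dimensional singular Riemannian space with nonnegative Ricci curvature on its smooth part. Iterating this construction as long as a straight-line factor persists terminates in at most $n$ steps and produces the desired isometry $M\cong M'\times\R^k$ with $M'$ containing no line.

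The main obstacle is the gluing in Step 3: one must verify that $\phi_t$ is complete on $M\setminus S_M$ and extends continuously to an isometric $\R$-action on all of $M$ leaving $S_M$ invariant, and in particular that integral curves of $\nabla b$ never enter $S_M$ in finite time. The topological splitting established in Step 1 is precisely the structural input that makes this extension possible, but aligning the analytic identity $\Hess b=0$ on the smooth part with the ambient topological product across the singular stratum is the technical heart of the argument.
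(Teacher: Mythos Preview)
Your Steps~1 and~2 closely parallel the paper's argument: verifying $\BG(0,n)$ for $\Hm^n$, applying Theorem~\ref{thm:splitting}, deducing that $b_+$ is harmonic and $C^2$ on $M\setminus S_M$, and invoking the Weitzenb\"ock formula to obtain $\Hess b_+=0$ there. (One small simplification: the maximum-principle detour for $|\nabla b|^2$ is unnecessary. Once the topological splitting is in hand, there is a line through every point along which $b_+$ has unit speed, so $\|\nabla b_+\|=1$ at every regular point directly; this is how the paper proceeds.)

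The substantive divergence is Step~3, and here there is a genuine gap that you flag but do not close. You propose to globalize the splitting by flowing along $\nabla b$ on $M\setminus S_M$ and extending by continuity, but completeness of the flow and its isometric extension across $S_M$ are left unproved; your final paragraph identifies this as ``the technical heart of the argument'' without supplying it. The paper avoids the flow picture altogether. From $\Hess b_+=0$ on $M\setminus S_M$ it concludes that $b_+$ is affine along every geodesic segment in $M\setminus S_M$; since arbitrary geodesic segments in $M$ are limits of segments with endpoints in the dense set $M\setminus S_M$ (using Lemma~\ref{lem:geod}), $b_+$ is affine along \emph{every} geodesic in $M$. Combined with the covering by bi-asymptotic lines, this makes $b_+$ ``averaged $D^2$'' in the sense of Mashiko~\cite{Mk:splitting}, and his Theorem~A delivers the isometric product $M\cong b_+^{-1}(0)\times\R$ directly at the level of the metric. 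This metric splitting criterion for Alexandrov spaces is exactly the ingredient your argument lacks: it replaces the delicate extension of smooth dynamics across the singular stratum by a statement about the distance function alone.

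If you wanted to salvage the flow approach instead, the key missing observation is that by Lemma~\ref{lem:geod} every line through a non-singular point lies entirely in $M\setminus S_M$, so the integral curves of $\nabla b$ never reach $S_M$ and the flow is complete on the regular part; density then gives the isometric extension. But carrying this out rigorously essentially reproves a special case of Mashiko's theorem.
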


If $M$ is a complete Riemannian orbifold, then Corollary
\ref{cor:splitting} was proved by Borzellino-Zhu \cite{BZ:splitting}.

We next consider the Bakry-Emery Ricci curvature.
Let $n$ be an integer with $n \ge 1$,
and $N$ a real number with $N > n$, or $N = +\infty$.
On an $n$-dimensional $C^2$ Riemannian manifold
with a measure $d\mu(x) = e^{-V(x)}\,d\vol(x)$,
where $V$ is a $C^2$ function and $\vol$ the Riemannian volume
measure,
the \emph{$N$-dimensional Bakry-Emery Ricci curvature tensor
$\Ric_{N,\mu}$} is defined by
\[
\Ric_{N,\mu} :=
\begin{cases}
   \Ric + \Hess V - (N-n)^{-1} dV\otimes dV
   & \text{if $n < N < +\infty$},\\
   \Ric + \Hess V
   & \text{if $N = +\infty$}.
\end{cases}
\]

\begin{cor} \label{cor:splitting-BE}
   Let $M$ be an $n$-dimensional singular Riemannian space,
   $N$ a number with $n < N \le +\infty$,
   and $V : M \to \R$ a continuous function which is
   of $C^2$ on $M \setminus S_M$.
   We assume that $\sup_M V < +\infty$ if $N = +\infty$.
   If the Bakry-Emery Ricci curvature satisfies $\Ric_{N,\mu} \ge 0$
   on $M \setminus S_M$ for $d\mu(x) := e^{-V(x)}\,d\vol(x)$,
   then $M$ is isometric to $M' \times \R^k$ and $V$ is constant
   on $\{x\} \times \R^k$ for each $x \in M'$,
   where $M'$ is a singular Riemannian space
   containing no straight line and $k := n-\dim M'$.
\end{cor}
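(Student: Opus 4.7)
The plan is to combine the topological splitting Theorem~\ref{thm:splitting} with the weighted Bochner identity on the smooth part $M\setminus S_M$ to force an isometric product structure. First I verify that $\mu = e^{-V}\,d\vol$ satisfies $\BG(0,N_\Omega)$ on each relatively compact open $\Omega\subset M$. For $n<N<+\infty$ this is the standard weighted Jacobi field computation comparing the $\mu$-volume element along radial geodesics, using $\Ric_{N,\mu}\ge 0$ on $M\setminus S_M$ together with the fact that $S_M$ is $\Hm^n$-negligible. For $N=+\infty$ the assumption $\sup_M V<+\infty$ lets one bound the density $e^{-V}$ above and below on $\Omega$ and absorb the $\Hess V$ contribution into a sufficiently large finite $N_\Omega$, reducing to the finite-dimensional case. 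With $\BG(0,N_\Omega)$ established, if $M$ contains a line then Theorem~\ref{thm:splitting} produces a topological product $M\cong M'_0\times\R$ and, as a byproduct of its proof, a Lipschitz Busemann function $b$ for the line which is $\mu$-harmonic in the weak sense and has $|\nabla b|=1$.

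I next upgrade this to isometric splitting on the smooth part $M\setminus S_M$. Elliptic regularity for the weighted Laplacian $\Delta_\mu = \Delta-\langle\nabla V,\nabla\cdot\rangle$ (whose coefficients are continuous there, since both $V$ and the metric are $C^2$) places $b$ in $C^2_{\loc}(M\setminus S_M)$. The weighted Bochner identity
\begin{equation*}
   \tfrac{1}{2}\Delta_\mu|\nabla b|^2 = |\Hess b|^2 + \langle\nabla\Delta_\mu b,\nabla b\rangle + \Ric_{N,\mu}(\nabla b,\nabla b) + \tfrac{1}{N-n}\langle\nabla V,\nabla b\rangle^2
\end{equation*}
(with the last two terms replaced by $(\Ric+\Hess V)(\nabla b,\nabla b)$ when $N=+\infty$) then applies. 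The left side vanishes since $|\nabla b|\equiv 1$, and the middle term vanishes since $\Delta_\mu b=0$; as $\Ric_{N,\mu}\ge 0$ and each remaining summand is nonnegative, all of them vanish, giving $\Hess b\equiv 0$ and $\langle\nabla V,\nabla b\rangle\equiv 0$ on $M\setminus S_M$. In the $N=+\infty$ case the second conclusion follows from $\Hess b=0$, which forces $\Delta b=\operatorname{tr}\Hess b=0$, combined with $\Delta_\mu b=0$. Hence $\nabla b$ is a parallel unit vector field on the smooth part, generating an isometric $\R$-flow $\phi_t$ along which $V$ is constant.

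Finally I extend this splitting across $S_M$ and iterate. Because $\phi_t$ is a uniformly Lipschitz isometric flow on $M\setminus S_M$, it extends by continuity to an isometric $\R$-action on $M$ whose orbits coincide with the $\R$-fibers of the topological product from Theorem~\ref{thm:splitting}, and $V$ is constant on each orbit. Since $S_M$ is characterized metrically, it is invariant under the action; the orbit space $M'_0$ therefore inherits a singular Riemannian structure satisfying the same Bakry-Emery hypothesis with induced weight, and one iterates $k := n-\dim M'$ times to reach a factor $M'$ with no straight line, yielding $M\cong M'\times\R^k$ isometrically. The principal obstacle is this last extension step: the infinitesimal isometric structure lives only on the smooth part, and one must carefully match it with the global topological splitting of Theorem~\ref{thm:splitting}, check that $S_M$ itself splits as $S_{M'}\times\R^k$, and verify that the quotient remains a singular Riemannian space so that the induction is legitimate.
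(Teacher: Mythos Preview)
Your overall strategy coincides with the paper's: verify $\BG(0,N_\Omega)$, apply Theorem~\ref{thm:splitting}, use elliptic regularity on $M\setminus S_M$ to make $b_+$ of class $C^2$ there, and then feed $b_+$ into the weighted Weitzenb\"ock formula to obtain $\Hess b_+=0$ and $\langle\nabla V,\nabla b_+\rangle=0$. The substantive divergence is in how one passes from $\Hess b_+=0$ on the smooth part to an isometric product on all of $M$. The paper does \emph{not} attempt to extend the gradient flow across $S_M$; instead it observes that $\Hess b_+=0$ makes $b_+$ affine along every geodesic in $M\setminus S_M$, approximates arbitrary geodesic segments in $M$ by such segments to get affinity on all of $M$, and then invokes Mashiko's splitting theorem \cite{Mk:splitting} for Alexandrov spaces (applied to $b_+$, which is ``averaged $D^2$'' since $M$ is covered by the bi-asymptotic lines from Lemma~\ref{lem:bsplitting}). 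This sidesteps entirely the extension issue you flag as the principal obstacle. Your flow argument could likely be completed (Lemma~\ref{lem:geod} forces the bi-asymptotic line through any non-singular point to stay in $M\setminus S_M$, so the flow is complete there and extends by density), but Mashiko's theorem gives the conclusion in one stroke.

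Two smaller points. Your $N=+\infty$ reduction is not correct as stated: bounding $e^{-V}$ on $\Omega$ does \emph{not} let you ``absorb $\Hess V$ into a finite $N_\Omega$'' in the sense of obtaining $\Ric_{N_\Omega,\mu}\ge 0$, since $\Ric_{N,\mu}\le\Ric_{\infty,\mu}$ for every finite $N$. The paper instead uses a direct pointwise Laplacian comparison (from \cite{Lch:BE,FLZ:splitting,WW:comp-BE}) giving $\Delta_\mu r_p\ge -(N_\Omega-1)/r_p$ on $\Omega$ with $N_\Omega=n+2\sup_M V-2\inf_\Omega V$, which suffices for $\BG(0,N_\Omega)$. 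On the other hand, your route to $\langle\nabla V,\nabla b\rangle=0$ in the $N=+\infty$ case (via $\Delta b=\operatorname{tr}\Hess b=0$ combined with $\Delta_\mu b=0$) is more direct than the paper's, which deduces only $\Hess V(\nabla b_+,\nabla b_+)=0$ from the Weitzenb\"ock identity and the product structure, and then needs $\sup_M V<+\infty$ to rule out linear growth of $V$ along the $\R$-factor.
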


Corollary \ref{cor:splitting-BE} is an extension of the result of
Lichnerowicz \cite{Lch:BE}
(see also \cite{WW:comp-BE} and \cite{FLZ:splitting})
for complete Riemannian manifolds.
In the case where $N = +\infty$, the assumption $\sup_M V < +\infty$
is necessary as was pointed out by Lott \cite{Lt:propBE}
(see also \cite{WW:comp-BE}).

\begin{rem}
   (1) All the results in this paper are true even in the case where
   $M$ has non-empty boundary.  We implicitly assume the Neumann
   boundary condition on the boundary of $M$ when we consider the
   Laplacian on $M$.  In particular, the results hold for any convex
   subset of $M$.

   (2) We can apply Corollaries \ref{cor:splitting} and
   \ref{cor:splitting-BE} to get some results for the fundamental group
   of a singular Riemannian space of nonnegative (Bakry-Emery) Ricci
   curvature (cf.~\cite{CG:split}).  However, we do not know if we can
   obtain the same results for an Alexandrov space satisfying the
   infinitesimal Bishop-Gromov condition.  One of the problems is that
   we cannot prove that a covering space inherits the infinitesimal
   Bishop-Gromov condition.  Another problem is that the splitting is
   only homeomorphic.  If the space splits as $M' \times \R$
   homeomorphically, then we do not know if $M'$ is an Alexandrov space
   or not, and we cannot apply our splitting theorem to $M'$.  This is
   not enough to investigate the fundamental group.

   (3) In our previous paper \cite{KwSy:lapcomp}, we proved a Laplacian
   comparison theorem and a splitting theorem weaker than those in this
   paper.  The proof in this paper is much easier than that in
   \cite{KwSy:lapcomp}.  We will not publish \cite{KwSy:lapcomp} from
   any journal.

   (4) Recently, H.-C.~Zhang and X.-P.~Zhu \cite{ZZ:splitting} have
   proved a version of an isometric splitting theorem under a new
   condition corresponding to the nonnegativity of Ricci curvature.
   Their condition implies the curvature-dimension condition and the
   infinitesimal Bishop-Gromov condition for the Hausdorff measure.
\end{rem}

\section{Preliminaries} \label{sec:prelim}

A \emph{geodesic space} is defined to be a metric space in which
any two points $x$ and $y$ can be joined by a length-minimizing curve
whose length is equal to the distance between $x$ and $y$.
Let $M$ be a proper geodesic space, where
`\emph{proper}' means that any bounded subset of $M$ is relatively  
compact.
We call a locally (resp.~globally) length-minimizing curve
in $M$ a \emph{geodesic} (resp.~a \emph{minimal geodesic}).
Denote by $M^2(\kappa)$ a complete simply connected $2$-dimensional  
space form
of constant curvature $\kappa$.
For three different points $x,y,z \in M$ and a real number $\kappa$,
we denote by $\tilde\angle_\kappa xyz$ the angle between
a minimal geodesic from $\tilde y$ to $\tilde x$ and
a minimal geodesic from $\tilde y$ to $\tilde z$
for three points $\tilde x, \tilde y, \tilde z \in M^2(\kappa)$
such that $d(\tilde x,\tilde y) = d(x,y)$,
$d(\tilde y,\tilde z) = d(y,z)$ and
$d(\tilde z,\tilde x) = d(z,x)$,
where $d$ is the distance function.
$\tilde\angle_\kappa xyz$ is uniquely determined if either the  
following (1)
or (2) is satisfied.
\begin{enumerate}
\item $\kappa \le 0$.
\item $\kappa > 0$ and $d(x,y) + d(y,z) + d(z,x) < \pi/\sqrt{\kappa}$.
\end{enumerate}
A proper geodesic space $M$ is said to be an \emph{Alexandrov space}
(\emph{of curvature bounded below locally}) if
for any point $x \in M$ there exists a neighborhood $U$ of $x$
and a real number $\kappa$ such that,
for any different four points $p,q_1,q_2,q_3 \in U$, we have
\begin{itemize}
\item[(T)] $\tilde\angle_\kappa q_ipq_j$, $i,j=1,2,3$, are all  
defined and satisfy
   \[
   \tilde\angle_\kappa q_1pq_2 + \tilde\angle_\kappa q_2pq_3
   + \tilde\angle_\kappa q_3pq_1 \le 2\pi.
   \]
\end{itemize}
For a given point $x \in M$, we denote by $\underline{\kappa}(x)$
the supremum of such $\kappa$'s.
Then, $\underline{\kappa}(x)$ is lower semi-continuous in $x \in M$,
so that $\underline{\kappa}$ is bounded from below on any
compact subset of an Alexandrov space.
The globalization theorem states that, for any compact subset $\Omega 
$ of
an Alexandrov space $M$, there exists a compact set $\Omega' \supset  
\Omega$
such that (T) holds for any mutually different
$p,q_1,q_2,q_3 \in \Omega$ and for any real number $\kappa$
with $\kappa \le \inf_{x\in\Omega'}\underline{\kappa}(x)$,
provided that $M$ is not a $1$-dimensional Riemannian manifold.
For a constant $\kappa$, we say that $M$ is of
\emph{curvature $\ge \kappa$}
if (T) holds for any mutually different four points $p,q_1,q_2,q_3  
\in M$.
In the case where $M$ is not a $1$-dimensional Riemannian manifold,
the globalization theorem implies that
$M$ is of curvature $\ge \kappa$ if and only if
$\underline{\kappa} \ge \kappa$ on $M$.
For a $1$-dimensional complete Riemannian manifold $M$ and for
$\kappa > 0$, $M$ is of curvature $\ge \kappa$ if and only if
the diameter of $M$ is at most $\pi/\sqrt{\kappa}$, i.e.,
$M$ is isometric to either a segment of length at most $\pi/\sqrt 
{\kappa}$
or a circle of length at most $2\pi/\sqrt{\kappa}$.

In this paper, we always assume that all Alexandrov spaces have
finite Hausdorff dimensions.
Refer to \cite{BGP,OS:rstralex,KMS:lap} for the basics
of the geometry and analysis on Alexandrov spaces, such as,
the space of directions, the tangent cone, etc.

Let $M$ be an Alexandrov space of Hausdorff dimension $n < +\infty$.
Then, $n$ coincides with the covering dimension of $M$, which is a
nonnegative integer.
Take any point $p \in M$ and fix it.
Denote by $\Sigma_pM$ the space of directions at $p$,
and by $K_pM$ the tangent cone at $p$.
$\Sigma_pM$ is an $(n-1)$-dimensional compact Alexandrov space
of curvature $\ge 1$ and
$K_pM$ an $n$-dimensional Alexandrov space of curvature $\ge 0$.

\begin{defn}[Singular point, $\delta$-singular point]
   A point $p \in M$ is called a \emph{singular point of $M$}
   if $\Sigma_pM$ is not isometric to the unit sphere $S^{n-1}$.
   For $\delta > 0$, we say that a point $p \in M$ is
   \emph{$\delta$-singular}
   if $\Hm^{n-1}(\Sigma_pM) \le \vol(S^{n-1})-\delta$.
   Let us denote the set of singular points of $M$ by
   $S_M$ and the set of $\delta$-singular points of $M$ by
   $S_\delta$.
\end{defn}

Note that a point $p \in M$ is non-singular if and only if
the tangent cone $K_pM$ is isometric to $\R^n$.
We have $S_M = \bigcup_{\delta > 0} S_\delta$.
Since the map $M \ni p \mapsto \Hm^n(\Sigma_pM)$ is lower
semi-continuous,
the set $S_\delta$ of $\delta$-singular points in $M$ is a closed set.
The following lemma is sometimes useful.

\begin{lem}[cf.~\cite{Pt:parallel}] \label{lem:geod}
   Let $\gamma$ be a minimal geodesic joining two points $p$ and $q$  
in $M$.
   Then, the spaces of directions $\Sigma_xM$ at all points
   $x \in \gamma \setminus \{p,q\}$ are isometric to each other.
   In particular, any minimal geodesic joining two non-singular
   {\rm(}resp.~non-$\delta$-singular{\rm)} points is contained in the  
set of
   non-singular {\rm(}resp.~non-$\delta$-singular{\rm)} points
   {\rm(}for any $\delta > 0${\rm)}.
\end{lem}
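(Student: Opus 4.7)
The plan is to derive the first assertion from Petrunin's construction of parallel transport along a minimal geodesic \cite{Pt:parallel}, and to deduce the ``in particular'' statement from it by a simple closedness argument.

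First I would check that, at any interior point $x \in \gamma \setminus \{p,q\}$, the two directions $\xi_+(x), \xi_-(x) \in \Sigma_xM$ of $\gamma$ (pointing toward $q$ and toward $p$ respectively) are antipodal, i.e., $d(\xi_+,\xi_-)=\pi$. Indeed, since $\gamma$ restricted to any small interval about $x$ remains length-minimizing, the first variation formula rules out $d(\xi_+,\xi_-)<\pi$, while the curvature $\ge 1$ bound on $\Sigma_xM$ rules out $d(\xi_+,\xi_-)>\pi$. Because $\Sigma_xM$ is an $(n-1)$-dimensional Alexandrov space of curvature $\ge 1$ containing a pair of antipodes, a standard suspension lemma yields that $\Sigma_xM$ is isometric to the spherical suspension of some Alexandrov space $\Sigma'_x$ of curvature $\ge 1$; equivalently, the tangent cone splits isometrically as $K_xM \cong \R\times K'_x$.

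Next, to compare $\Sigma_{x_1}M$ with $\Sigma_{x_2}M$ for any two interior points $x_1,x_2\in\gamma$, I would invoke Petrunin's parallel transport along $\gamma$. One moves from $x_1$ to $x_2$ in small steps along $\gamma$; on each short subarc the splitting $K_xM \cong \R\times K'_x$ together with a careful approximation by short minimizing geodesics in prescribed directions produces a distance-preserving map of small punctured balls whose blow-up is an isometry $\Sigma'_{x_1}\to\Sigma'_{x_2}$. Combined with the canonical identification of the $\R$-factors along $\gamma$, this yields an isometry $\Sigma_{x_1}M\to\Sigma_{x_2}M$ sending $\xi_\pm(x_1)$ to $\xi_\pm(x_2)$. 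The main technical obstacle is showing that this candidate map is well defined and globally distance-preserving, which is precisely the content of \cite{Pt:parallel}.

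For the ``in particular'' statement, I first treat the $\delta$-singular case. Since $S_\delta$ is closed in $M$ and $p,q\notin S_\delta$, there exist open neighborhoods $U_p,U_q$ of $p,q$ disjoint from $S_\delta$. Choose interior points $x_1\in U_p\cap\gamma$ and $x_2\in U_q\cap\gamma$; both satisfy $\Hm^{n-1}(\Sigma_{x_i}M)>\vol(S^{n-1})-\delta$. By the first part of the lemma, any other interior point $x$ of $\gamma$ has $\Sigma_xM$ isometric to $\Sigma_{x_1}M$ and hence the same $(n-1)$-dimensional Hausdorff measure, so $x\notin S_\delta$ as well. The non-singular case then reduces to this via $S_M=\bigcup_{\delta>0}S_\delta$: if $p,q\notin S_M$, then $p,q\notin S_\delta$ for every $\delta>0$, so by the above no interior point of $\gamma$ lies in any $S_\delta$, and hence none lies in $S_M$.
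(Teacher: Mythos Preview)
The paper does not actually give a proof of this lemma: it is stated with the citation ``cf.~\cite{Pt:parallel}'' and no argument is supplied. Your proposal is therefore not competing with a proof in the paper but rather filling in what the citation is meant to cover, and it does so correctly. The first part---antipodality of $\xi_\pm$, the resulting suspension/splitting structure of $\Sigma_xM$ and $K_xM$, and the appeal to Petrunin's parallel transport to produce the isometry between spaces of directions at interior points---is exactly the content of \cite{Pt:parallel}. Your deduction of the ``in particular'' clause via the closedness of $S_\delta$ and the identity $S_M=\bigcup_{\delta>0}S_\delta$ is also correct.

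One small remark: in the non-$\delta$-singular argument you only need a single interior point $x_1$ near $p$ (the second point $x_2$ is redundant), since the first part already forces all interior spaces of directions to be mutually isometric. Likewise, for the non-singular case one can argue slightly more directly without passing through every $\delta$: the common value $c:=\Hm^{n-1}(\Sigma_xM)$ for interior $x$ satisfies $c\le \vol(S^{n-1})$, while lower semi-continuity at the non-singular endpoint $p$ gives $c\ge \Hm^{n-1}(\Sigma_pM)=\vol(S^{n-1})$; equality then forces $\Sigma_xM\cong S^{n-1}$ by the volume rigidity for Alexandrov spaces of curvature $\ge 1$. Your route via $S_\delta$ is equally valid.
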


\begin{defn}[Boundary]
   The boundary of an Alexandrov space $M$ is defined inductively.
   If $M$ is one-dimensional, then $M$ is a complete Riemannian manifold
   and the \emph{boundary of $M$} is defined as usual.
   Assume that $M$ has dimension $\ge 2$.
   A point $p \in M$ is said to be a \emph{boundary point of $M$} if
   $\Sigma_pM$ has non-empty boundary.
\end{defn}

Any boundary point of $M$ is a singular point.
More strongly, the boundary of $M$ is contained in
$S_\delta$ for a sufficiently small $\delta > 0$, which follows from
the Morse theory in \cite{Pr:alex2,Pr:morse}.

The doubling theorem (cf.~\cite[\S 5]{Pr:alex2}, \cite[13.2]{BGP})
states that if $M$ has non-empty boundary, then the \emph{double of $M$}
(i.e., the gluing of two copies of $M$ along their boundaries)
is an Alexandrov space without boundary and each copy of $M$ is
convex in the double.

Denote by $\hat S_M$ (resp.~$\hat S_\delta$)
the set of singular (resp.~$\delta$-singular) points of
$\dbl(M)$ contained in $M$, where $M$ is identified with a copy in
$\dbl(M)$.
We agree that $\hat S_M = S_M$ and $\hat S_\delta = S_\delta$ provided
$M$ has no boundary.

The following shows the existence of differentiable and
Riemannian structure on $M$.

\begin{thm}
   \label{thm:str}
   For an $n$-dimensional Alexandrov space $M$, we have the
   following:

   {\rm(1)} There exists a number $\delta_n > 0$ depending only on $n$
     such that
     $M^* := M \setminus \hat S_{\delta_n}$ is a manifold {\rm(}with
     boundary $\partial M^*${\rm)}
     \cite{BGP,Pr:alex2,Pr:morse}
     and has a natural $C^\infty$ differentiable structure
     {\rm(}even on the boundary $\partial M^*${\rm)}
     \cite{KMS:lap}.

   {\rm(2)} The Hausdorff dimension of $S_M$ is at most $n-1$
     \cite{BGP, OS:rstralex},
     and that of $\hat S_M$ is at most $n-2$ \cite{BGP}.
     We have $S_M = \hat S_M \cup \partial M^*$.

   {\rm(3)} We have a unique continuous Riemannian metric $g$ on
     $M \setminus S_M \subset M^*$
     such that the distance function induced from $g$ coincides with
     the original one of $M$ \cite{OS:rstralex}.
     The tangent space at each point in $M \setminus S_M$ is
     isometrically identified with the tangent
     cone \cite{OS:rstralex}.
     The volume measure on $M^*$ induced from $g$
     coincides with the $n$-dimensional Hausdorff measure $\Hm^n$
     \cite{OS:rstralex}.
\end{thm}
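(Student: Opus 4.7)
The statement is a compilation of three well-known structure results for finite-dimensional Alexandrov spaces, so the plan is to combine Perelman's strainer/Morse theory with the Otsu--Shioya analysis of the regular locus and the smoothing construction of Kuwae--Machigashira--Shioya.

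For part (1), I would use Perelman's strainer calculus. At any point $p \in M \setminus \hat S_{\delta_n}$ the space of directions $\Sigma_pM$ has $\Hm^{n-1}$-measure close to $\Hm^{n-1}(S^{n-1})$, so by a Bishop-type inequality on the unit sphere one can locate $n$ pairs $(a_i,b_i)$ with $\tilde\angle_0 a_ipa_j$ close to $\pi/2$ for $i \neq j$ and $\tilde\angle_0 a_ipb_i$ close to $\pi$; this is an $(n,\delta_n)$-strainer at $p$ with $\delta_n$ depending only on $n$. The distance map $x \mapsto (d(a_i,x))_{i=1}^{n}$ then becomes a local bi-Lipschitz embedding into $\R^n$, which equips $M^*$ with the structure of a topological manifold; near points of $\partial M^*$ one applies the doubling theorem to reduce to the interior case. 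For the $C^\infty$ structure I would replace the raw distance coordinates by their averages against a smooth kernel supported in a small Euclidean ball and verify that the transition maps are smooth, using the second-order semi-concavity of distance functions.

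For part (2), I would stratify $S_M$ according to the dimension $k$ of the maximal Euclidean factor in the tangent cone $K_pM$. Using the gradient (or quasi-geodesic) flow of semi-concave distance functions one shows that the $k$-th stratum has Hausdorff dimension at most $k$, so summing over $0 \le k \le n-1$ gives $\dim_{\Hm} S_M \le n-1$. The identification $S_M = \hat S_M \cup \partial M^*$ and the sharper bound $\dim_{\Hm} \hat S_M \le n-2$ both come from the observation that a codimension-one singular stratum inside $\dbl(M)$ would have a tangent cone that becomes Euclidean after doubling, forcing such points to lie on $\partial M^*$.

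For part (3), at each $p \in M \setminus S_M$ the tangent cone is isometric to $\R^n$ and canonically endows $T_pM$ with an inner product $g_p$; continuity of $g$ in the smooth atlas from (1) follows by showing that the pointed rescaled Gromov--Hausdorff limit depends continuously on $p$ on the nonsingular locus, so that the Jacobians of strainer charts are continuous. Compatibility of the induced length distance with the original distance is obtained by approximating minimal geodesics lying in $M \setminus S_M$ by broken distance curves and passing to the limit, and coincidence of the volume measures with $\Hm^n$ then reduces to the same local bi-Lipschitz identification. The main obstacle in the whole program is the codimension-two bound for $\hat S_M$: to rule out a hypothetical codimension-one interior singular stratum one must show that any tangent cone of the form $\R^{n-1}\times C$, with $C$ a one-dimensional cone of total angle less than $2\pi$, actually forces $p$ onto $\partial M^*$, which is precisely the delicate content of Perelman's Morse theory that would have to be reproduced in full in this setting.
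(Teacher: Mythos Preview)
The paper does not actually prove this theorem: it is stated purely as a compilation of known structure results, with each assertion attributed to the cited references (\cite{BGP,Pr:alex2,Pr:morse,KMS:lap,OS:rstralex}), and is followed only by a short remark on how the $C^\infty$ atlas of \cite{KMS:lap} extends to all of $M^*$. There is no argument in the paper to compare your proposal against.

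Your sketch, by contrast, outlines the substance of those cited proofs, and as a roadmap it is accurate: strainer coordinates and Perelman's Morse theory for (1), the stratification and dimension estimates of \cite{BGP,OS:rstralex} for (2), and the Otsu--Shioya construction of the Riemannian metric for (3). You also correctly flag the genuinely hard step, namely the codimension-two bound for $\hat S_M$, which relies on Perelman's stability/Morse arguments and is not something one can reproduce in a paragraph. So your proposal is not wrong, but it is answering a different question than the paper does: where the authors simply invoke the literature, you are sketching that literature. For the purposes of this paper the appropriate ``proof'' is just the list of citations already embedded in the statement.
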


\begin{rem}
   In \cite{KMS:lap} we construct a $C^\infty$ structure only on
   $M \setminus B(S_{\delta_n},\epsilon)$.
   However this is independent of $\epsilon$ and extends to
   $M^*$.
   The $C^\infty$ structure is a refinement of the structures
   of \cite{OS:rstralex,Ot:secdiff,Pr:DC} and is
   compatible with the DC structure of \cite{Pr:DC}.
\end{rem}

Note that the metric $g$ is defined only on $M^* \setminus S_M$ and
does not continuously extend to any other point of $M$.
In general, the set of non-singular points $M^* \setminus S_M$ is not
a manifold.  There is an example of an Alexandrov space $M$ such that
$S_M$ is dense in $M$ (see \cite{OS:rstralex}).

\begin{defn}[Cut-locus]
   Let $p \in M$ be a point.
   We say that a point $x \in M$ is a \emph{cut point of $p$}
   if no minimal geodesic from $p$ contains $x$ as an interior point.
   We agree that $p$ is not a cut point of $p$.
   The set of cut points of $p$ is called the \emph{cut-locus of $p$}  
and
   denoted by $\Cut_p$.
\end{defn}

Note that $\Cut_p$ is not necessarily a closed set.
For the $W_{p,t}$ defined in \S\ref{sec:intro}, it follows that
$\bigcup_{0 < t < 1} W_{p,t} = M \setminus \Cut_p$.
The cut-locus $\Cut_p$ is a Borel subset and satisfies
$\Hm^n(\Cut_p) = 0$ \cite[Proposition 3.1]{OS:rstralex}.

By \cite[Lemma 4.1]{OS:rstralex},
the function $r_p = d(p,\cdot)$ is differentiable on
$M \setminus (S_M \cup \Cut_p \cup \{p\})$.
At any differentiable point $x$ of $r_p$,
$-\nabla r_p(x)$ is tangent to a unique minimal geodesic from $p$ to  
$x$,
where $\nabla r_p(x)$ denotes the gradient vector of $r_p$ at $x$.
This implies that the gradient vector field $\nabla r_p$ is  
continuous at
all differentiable points of $r_p$.

\section{Sobolev Spaces and
Maximum Principle}

A main purpose of this section is to prove the maximum principle
for subharmonic functions on a weighted Alexandrov space.
To prove it, we apply the maximum principle in the setting of a  
Dirichlet form
which was proved by the first named author \cite{Kw:maxprinsemi}.
For that, we need to investigate Sobolev spaces on a weighted  
Alexandrov space
by using Cheeger's theory \cite{Ch:metmeas}.
We refer \cite{FOT:Dir} for the basic terminologies of
Dirichlet forms and \cite{Ch:metmeas} for those of
Cheeger's Sobolev spaces.

Let $(X,d)$ be a proper geodesic space and $\mu$ a positive Radon
measure on $X$ with full support.
We take two open subsets $\Omega, \Omega' \subset X$ with
$\bar\Omega \subset \Omega'$, where $\bar\Omega$ is the closure of $\Omega$
in $X$.
Assume that $(X,d,\mu)$ satisfies the volume doubling condition for
metric balls contained in $\Omega'$
and a $(1,p)$-Poincar\'e inequality on any metric ball contained in $\Omega'$
for a fixed number $p > 1$ in the sense of
upper gradient (cf.~\cite{Ch:metmeas}).
$L^p(\Omega;\mu)$ denotes the Banach space of $L^p$ functions on $ 
\Omega$
with respect to $\mu$, and
$W^{1,p}(\Omega;\mu)$ the $(1,p)$-Sobolev space
of $(\Omega,d,\mu)$ defined by Cheeger \cite{Ch:metmeas}.
We denote by $g_u$ a minimal generalized upper gradient for
$u \in W^{1,p}(\Omega;\mu)$, which is unique up to modification on sets
of $\mu$-measure zero (see \cite[Theorem 2.10]{Ch:metmeas}).
Let $W^{1,p}_0(\Omega;\mu)$ be the $W^{1,p}$-closure of
the set of functions $u \in W^{1,p}(\Omega;\mu)$
such that the support of $u$ is compact and contained in $\Omega$.
Denote by $W^{1,p}_{0,\loc}(\Omega;\mu)$ the \emph{localization of
$W^{1,p}_0(\Omega;\mu)$}, i.e., the set of functions $u : \Omega \to \R$
such that, for any relatively compact open subset $U \subset X$
with $\bar U \subset \Omega$,
there is a function $u_U \in W^{1,p}_0(\Omega;\mu)$ satisfying that
$u = u_U$ $\mu$-a.e.~on $U$.
For $u \in W^{1,p}_{0,\loc}(\Omega;\mu)$,
we define $g_u : \Omega \to \R$ to be $g_{u_U}$ on each $U$.
By \cite[Corollary 2.25]{Ch:metmeas}, the function $g_u$
is defined uniquely up to modification on sets of $\mu$-measure zero.
We also call $g_u$ a minimal generalized upper gradient for $u$.

\begin{rem}
  In Cheeger's paper \cite{Ch:metmeas}, the statements of the theorems
  are described under $\Omega' = X$.  However, all the discussions in
  the proofs are local and valid for $\Omega' \subset X$.
\end{rem}

The following lemma is needed for the proof of the maximum principle.

\begin{lem} \label{lem:W-Lip}
   For a function $u : \Omega \to \R$, the following {\rm(1)} and
   {\rm (2)} are equivalent to each other.
   \begin{enumerate}
   \item We have $u \in W^{1,p}_{0,\loc}(\Omega;\mu) \cap C(\Omega)$
     and $g_u \le 1$ $\mu$-a.e., where $C(\Omega)$ denotes the set of
     continuous functions on $\Omega$.
   \item $u$ is locally $1$-Lipschitz on $\Omega$,
     i.e., for any point $x_0 \in \Omega$, there exists
     a neighborhood $B$ of $x_0$ in $\Omega$ such that
     $u|_B$ is $1$-Lipschitz.
   \end{enumerate}
\end{lem}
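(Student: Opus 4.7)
The direction $(2)\Rightarrow(1)$ is routine: local $1$-Lipschitzness forces continuity, and for any relatively compact open $U$ with $\bar U\subset\Omega$ the product $\phi u$ of $u$ with a compactly supported Lipschitz cutoff $\phi\equiv 1$ on $U$ is globally Lipschitz of compact support, hence lies in $W^{1,p}_0(\Omega;\mu)$. Its pointwise Lipschitz function is an honest upper gradient for $\phi u$ and agrees with the pointwise Lipschitz function of $u$ on $U$, so is bounded by $1$ there. Minimality of $g_{\phi u}$ together with the locality in \cite[Corollary 2.25]{Ch:metmeas} gives $g_u\le 1$ $\mu$-a.e.\ on $U$; since $U$ is arbitrary, $u\in W^{1,p}_{0,\loc}(\Omega;\mu)$ with $g_u\le 1$ $\mu$-a.e.

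For $(1)\Rightarrow(2)$, fix $x_0\in\Omega$ and choose $r>0$ so small that $B:=B(x_0,r)$ has $\bar B\subset\Omega$ and that volume doubling together with the $(1,p)$-Poincar\'e inequality are valid on $B$ and a fixed enlargement contained in $\Omega'$. Pick a representative $u_B\in W^{1,p}_0(\Omega;\mu)$ of $u|_B$ with $g_{u_B}\le 1$ $\mu$-a.e. The standard Haj\l asz--Koskela--Cheeger telescope argument, applied to chains of shrinking balls along a path joining $\mu$-Lebesgue points of $u_B$, produces a preliminary estimate
\[
|u(x)-u(y)|\le C\,d(x,y)\qquad\text{for }\mu\text{-a.e.\ }x,y\in\tfrac12 B,
\]
with $C$ depending only on the doubling and Poincar\'e constants. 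Continuity of $u$ then upgrades the inequality to every pair, so $u$ is locally $C$-Lipschitz near $x_0$.

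The delicate point is to improve $C$ to the sharp value $1$. Once $u$ is known to be locally Lipschitz, Cheeger's identification of the minimal generalized upper gradient with the asymptotic (pointwise) Lipschitz constant yields $\Lip(u)(x)=g_u(x)\le 1$ at $\mu$-a.e.\ $x\in B$. In the geodesic space $X$ this pointwise bound is converted into a sharp metric bound as follows: for $x,y$ near $x_0$ connected by a minimizing geodesic $\gamma$, doubling guarantees that arbitrarily thin tubes around $\gamma$ carry positive $\mu$-mass, so $\gamma$ can be approximated by broken geodesics whose breakpoints lie at points where $\Lip(u)\le 1$; summing local estimates along such approximating curves and passing to the limit as the perturbation shrinks gives $|u(x)-u(y)|\le d(x,y)$. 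The principal obstacle is exactly this last step---the Poincar\'e inequality alone is insensitive to the sharp geodesic metric and only delivers the geometric constant $C$; obtaining the sharp bound $1$ requires simultaneously using the geodesic structure of $X$ and Cheeger's identification $g_u=\Lip(u)$, which together let one integrate along genuine minimizers.
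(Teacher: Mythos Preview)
Your treatment of $(2)\Rightarrow(1)$ is fine and matches the paper's one-line dismissal.

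For $(1)\Rightarrow(2)$ your route differs from the paper's, and the final sharpening step has a genuine gap. The paper does not pass through a preliminary non-sharp Lipschitz bound at all. Instead it invokes Cheeger's approximation (\cite[Theorem~5.3]{Ch:metmeas} together with the construction in the proof of \cite[Theorem~5.1]{Ch:metmeas}) to produce Lipschitz functions $u_i\to u$ in $W^{1,p}$ on a fixed ball $B'$ whose upper gradients $g_i$ are \emph{continuous} on $B'$ and satisfy $\limsup_i g_i\le g_u\le 1$ $\mu$-a.e.; from continuity of the $g_i$ and full support of $\mu$ the paper gets $g_i\le 1+\epsilon$ everywhere on $B'$ for large $i$. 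Since $g_i$ is an honest upper gradient for $u_i$, integrating it along a minimal geodesic contained in $B'$ yields $(1+\epsilon)$-Lipschitzness of $u_i$ on a smaller ball $B$, and Arzel\`a--Ascoli plus continuity of $u$ finish. The crucial a.e.-to-everywhere upgrade is thus carried out on the continuous approximating gradients, where it is straightforward.

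Your argument attempts that upgrade directly for $\Lip(u)$, and the broken-geodesic step does not accomplish it. The bound $\Lip(u)(z)\le 1$ at a breakpoint $z$ controls $|u(z)-u(w)|$ only for $w$ in a neighbourhood of $z$ whose radius depends on $z$ with no uniform lower bound; nothing forces the next breakpoint to lie inside that neighbourhood, so the telescoping sum is uncontrolled. An Egorov stratification does not obviously rescue this: the set where the radius exceeds $\delta$ has small complement only as $\delta\to 0$, while the balls along $\gamma$ you must hit shrink at the same rate, and you give no reason the two rates are compatible. The conclusion you want is indeed true under doubling plus Poincar\'e, but its honest proof goes through the $p$-modulus of curve families: the everywhere-$\le 1$ modification of $g_u$ is still a $p$-weak upper gradient, and under Poincar\'e the family of curves between two nearby points of length at most $(1+\epsilon)d(x,y)$ has positive $p$-modulus, so some such curve satisfies the upper-gradient inequality. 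That is a different and heavier argument than the one you sketch, and it is exactly what the paper's use of continuous approximating gradients sidesteps.
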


\begin{proof}
   The implication (2) $\implies$ (1) follows from a standard  
   discussion.

   We prove (1) $\implies$ (2).
   Assume that $u \in W^{1,p}_{0,\loc}(\Omega;\mu) \cap C(\Omega)$
   and $g_u \le 1$ $\mu$-a.e.
   We fix a point $x_0 \in \Omega$ and take a closed ball $B'$  
   centered at $x_0$
   and contained in $\Omega$.
   There is a closed ball $B$ centered at $x_0$
   such that all minimal geodesics joining two points in $B$ are
   entirely contained in $B'$.  We have $B \subset B'$ and
   $u|_{B'} \in W^{1,p}(B';\mu) \cap C(B')$.
   Denote by $C^{\Lip}(B')$ the set of Lipschitz functions on $B'$.
   By \cite[Theorem 5.3]{Ch:metmeas}
   (see also the proof of \cite[Theorem 5.1]{Ch:metmeas}), there are  
   functions
   $u_i \in C^{\Lip}(B')$ and $g_i \in L^p(B';\mu) \cap C(B')$,
   $i = 1,2,\dots$,
   such that $u_i \to u$, $g_i \to g_u$ in $L^p(B';\mu)$
   as $i \to \infty$,
   $\limsup_{i\to\infty} g_i \le g_u$ $\mu$-a.e.~on $B'$,
   and each $g_i$ is an upper gradient for $u_i$.
   Since $g_u \le 1$ $\mu$-a.e.~on $\Omega$ and since each $g_i$ is  
   continuous,
   there is a number $i_\epsilon$ for each $\epsilon > 0$ such that
   $g_i \le 1+\epsilon$ on $B'$ for all $i \ge i_\epsilon$.
   For any $x,y \in B$, we take a minimal geodesic
   $\gamma : [\,0,d(x,y)\,] \to X$ joining $x$ to $y$ with arclength
   parameter.  Since $\gamma$ is contained in $B'$, we have
   \[
   |u_i(x) - u_i(y)|
   \le \int_0^{d(x,y)} g_i\circ\gamma(s)\;ds
   \le (1+\epsilon)\,d(x,y),
   \]
   namely $u_i$ for $i \ge i_\epsilon$ is $(1+\epsilon)$-Lipschitz on  
   $B$ for any $\epsilon > 0$.
   By the Arzel\'a-Ascoli theorem, $\{u_i|_B\}$ has a subsequence which
   uniformly converges to a $1$-Lipschitz function $v$ on $B$.
   Since $u$ is continuous, we have $u = v$ on $B$ and $u$ is
   $1$-Lipschitz on $B$.
   This completes the proof.
\end{proof}

>From now on, we consider an Alexandrov space.
Let $M$ be an Alexandrov space, $\mu$ a positive Radon measure on $M$
with full support, and $\Omega \subset M$ an open subset.
We assume that $\mu$ satisfies $\BG(\kappa,N)$ on some neighborhood
$\Omega'$ of $\bar\Omega$
for two real numbers $N \ge 1$ and $\kappa$.
According to the result of Ranjbar-Motlagh \cite{Rm:Poincare},
we have a $(1,1)$-Poincar\'e inequality on any ball in $\Omega'$
in the sense of upper gradient, which implies
a $(1,p)$-Poincar\'e inequality on any ball in $\Omega'$ for any $p \ge 1$.
Since the infinitesimal Bishop-Gromov condition implies the volume
doubling condition for balls in $\Omega'$,
we can apply Cheeger's theory \cite{Ch:metmeas} of Sobolev spaces
on the metric measure space $(\Omega,d,\mu)$.

\begin{lem} \label{lem:cutmuzero}
   The set $\Cut_p \cap \Omega$ is of $\mu$-measure zero
   for any point $p \in M$.
\end{lem}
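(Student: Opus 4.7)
My plan is to combine monotone convergence for the family $\{W_{p,t}\}_{t \in (0,1)}$ with a single application of $\BG(\kappa,N)$ to indicator functions of compact sets.

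First I would verify that $W_{p,t}$ is monotonically nondecreasing in $t \in (0,1)$: if $x \in W_{p,t_1}$ is witnessed by a minimal geodesic $py_1$ with $r_p(x) = t_1 r_p(y_1)$, then for any $t_2 \in (t_1,1)$ the point $y_2$ on $py_1$ with $r_p(y_2) = r_p(x)/t_2$ satisfies $r_p(x) < r_p(y_2) < r_p(y_1)$, and the sub-arc $py_2$ of $py_1$ is still minimal and realizes $x \in W_{p,t_2}$. Combined with the identity $\bigcup_{0<t<1} W_{p,t} = M \setminus \Cut_p$ recorded in Section 2 and the convention $p \notin \Cut_p$, monotone convergence from below yields
\[
\lim_{t \to 1^-} \mu\bigl((W_{p,t} \setminus \{p\}) \cap \Omega\bigr) = \mu(\Omega \setminus \{p\}) - \mu(\Cut_p \cap \Omega).
\]

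Next I would fix a compact set $K \subset \Omega \setminus \{p\}$ (taken inside $B(p, \pi/\sqrt{\kappa})$ in the $\kappa > 0$ case) and apply $\BG(\kappa,N)$ with $f = \mathbf{1}_K$:
\[
\mu\bigl(\Phi_{p,t}^{-1}(K) \cap W_{p,t}\bigr) \ge \int_K \frac{t\, s_\kappa(t r_p(x))^{N-1}}{s_\kappa(r_p(x))^{N-1}}\, d\mu(x).
\]
Every $y$ in the preimage satisfies $d(y, \Phi_{p,t}(y)) = (1-t)\, r_p(\Phi_{p,t}(y)) \le (1-t)\max_K r_p$, so the preimage lies in a tubular neighborhood of $K$ whose radius tends to $0$ as $t \to 1^-$. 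Since $d(K, M \setminus \Omega) > 0$ and $p \notin K$, this preimage is contained in $(W_{p,t} \setminus \{p\}) \cap \Omega$ once $t$ is close enough to $1$. Moreover the integrand above converges to $1$ pointwise and is uniformly bounded on $K$ (which stays at positive distance from $p$), so dominated convergence gives $\liminf_{t \to 1^-} \mu((W_{p,t} \setminus \{p\}) \cap \Omega) \ge \mu(K)$.

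Passing to a countable compact exhaustion $K_n \nearrow \Omega \setminus \{p\}$ (obtained by a local covering inside $B(p, \pi/\sqrt{\kappa})$ when $\kappa > 0$ and summing the resulting inequalities), the two displays combine to $\mu(\Cut_p \cap \Omega) \le 0$, hence $=0$. The one step that looks less mechanical is the verification that $\Phi_{p,t}^{-1}(K) \cap W_{p,t}$ eventually lies inside $\Omega$; this reduces entirely to the explicit distance bound above together with $d(K, M \setminus \Omega) > 0$, and should not present a serious obstacle.
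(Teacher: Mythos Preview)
Your route is genuinely different from the paper's. The paper argues by contradiction: supposing $\mu(\Cut_p\cap\Omega)>0$, it chooses a set $A\subset\Cut_p\cap\Omega$ of positive measure with compact closure in $\Omega\setminus\{p\}$, applies $\BG(\kappa,N)$ to $\mathbf{1}_A$ to get $\mu(\Phi_{p,t}^{-1}(A))>0$ for every $t\in(0,1)$, and then observes that these preimages are pairwise disjoint (because an interior point of a minimal geodesic ending at a cut point cannot itself be a cut point). This produces uncountably many disjoint sets of positive measure inside a bounded region, contradicting the Radon property of $\mu$. No limit in $t$ is ever taken. Your direct approach via the monotone family $\{W_{p,t}\}$ and the $t\to 1^-$ limit is a legitimate alternative and makes the role of $\BG$ more quantitatively visible; the paper's argument is shorter and sidesteps any regularity or finiteness bookkeeping.

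There is, however, a gap in your last step. The formula
\[
\lim_{t\to1^-}\mu\bigl((W_{p,t}\setminus\{p\})\cap\Omega\bigr)=\mu(\Omega\setminus\{p\})-\mu(\Cut_p\cap\Omega)
\]
and the deduction $\mu(\Cut_p\cap\Omega)\le 0$ from ``$\mu((\Omega\setminus\{p\})\setminus\Cut_p)\ge\mu(K_n)$ for all $n$'' both presuppose $\mu(\Omega\setminus\{p\})<\infty$; when this quantity is infinite the subtraction is undefined and the inequality $\mu(E\setminus F)\ge\mu(E)$ tells you nothing about $\mu(F)$. Nothing in the standing hypotheses forces $\Omega$ to have finite measure. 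The repair is routine: run your entire argument with $\Omega$ replaced by $\Omega\cap B(p,R)$ (finite measure since $M$ is proper and $\mu$ is Radon), obtain $\mu(\Cut_p\cap\Omega\cap B(p,R))=0$, and let $R\to\infty$. With that localization your proof is complete for $\kappa\le 0$; your handling of the support constraint in condition~($*$) when $\kappa>0$ is still vague, though this point is also not confronted explicitly in the paper and is irrelevant for the application to the splitting theorem, where $\kappa=0$.
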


\begin{proof}
   Assume that $\mu(\Cut_p \cap \Omega) > 0$ for some point $p \in M$.
   Then, for such a point $p$,
   there is a small number $\delta > 0$ such that the set
   \[
   A := \{\;x \in \Cut_p \cap \Omega \mid d(x,\partial \Omega) > \delta,
   \ \delta < d(x,p) < 1/\delta\;\}
   \]
   has positive $\mu$-measure.  The closure of $A$ is compact and
   contained in $\Omega \setminus \{p\}$.
   Applying $\BG(\kappa,N_\Omega)$ on $\Omega$
   to the indicator function of $A$
   yields that $\mu(\Phi_{p,t}^{-1}(A)) > 0$ for any $t \in (\,0,1\,)$.
   It follows from $A \subset \Cut_p$ that
   $\Phi_{p,t}^{-1}(A) \cap \Phi_{p,t'}^{-1}(A) = \emptyset$
   for any mutually different $t,t' \in (\,0,1\,)$.
   Since $\mu$ is a Radon measure,
   this is a contradiction.
\end{proof}

\begin{lem} \label{lem:muzero}
   The set $S_M \cap \Omega$ is of $\mu$-measure zero.
\end{lem}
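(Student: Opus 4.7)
The plan is to decompose $S_M = \hat S_M \cup \partial M^*$ via Theorem~\ref{thm:str}(2) and to show $\mu$-measure zero for each of the two pieces by reducing to Lemma~\ref{lem:cutmuzero}.

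For the boundary piece $\partial M^* \cap \Omega$, I would first pick a point $p \in M \setminus \partial M^*$, which exists since $\partial M^*$ has Hausdorff dimension at most $n-1$ where $n = \dim M$. Using the doubling $\dbl(M)$, every boundary point $x \in \partial M^* \setminus \{p\}$ is a cut point of $p$ in $M$: the minimal geodesic from $p$ to $x$ extends in $\dbl(M)$ into the reflected copy of $M$ across $\partial M$, so in $M$ itself no minimal extension past $x$ is available, forcing $x \in \Cut_p$. Hence $\partial M^* \cap \Omega \subset \Cut_p \cup \{p\}$, and Lemma~\ref{lem:cutmuzero} (together with $\mu(\{p\}) = 0$, a consequence of the volume doubling inherent in $\BG(\kappa,N)$) yields $\mu(\partial M^* \cap \Omega) = 0$.

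For the interior singular piece $\hat S_M \cap \Omega$, I would work in $\dbl(M)$, an Alexandrov space without boundary in which $\hat S_M$ coincides with the singular set of $\dbl(M)$ restricted to $M$. The plan is to extend $\mu$ symmetrically to a Radon measure $\tilde\mu$ on $\dbl(M)$ and verify, via the reflection-invariance of geodesics across $\partial M$, that $\tilde\mu$ inherits $\BG(\kappa,N)$ on a doubled neighborhood of $\bar\Omega$, so that Lemma~\ref{lem:cutmuzero} applies in $\dbl(M)$. For each $x \in \hat S_M$ the space of directions $\Sigma_x$ (in $\dbl(M)$) fails to be isometric to $S^{n-1}$, so the subset of $\Sigma_x$ consisting of directions admitting an antipode at distance $\pi$ has Hausdorff dimension strictly less than $n-1$. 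Consequently, the set of points $p \in \dbl(M)$ for which $x \notin \Cut_p$ (that is, for which some minimal geodesic from $p$ passes through $x$ and extends beyond) is the Lipschitz image of a lower-dimensional set, and hence of $\Hm^n$-measure zero. A Fubini argument against $\tilde\mu \otimes \Hm^n$, combined with Lemma~\ref{lem:cutmuzero} applied to each fixed $p$, then yields $\tilde\mu(\hat S_M \cap \Omega) = 0$, whence $\mu(\hat S_M \cap \Omega) = 0$.

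The hard part will be verifying the two geometric facts underpinning this Fubini argument: that $\tilde\mu$ does inherit $\BG(\kappa,N)$ under doubling, and that at each $x \in \hat S_M$ the cone of extension points has $\Hm^n$-measure zero, which requires a careful analysis of the antipodal structure in a singular $\Sigma_x$. A potentially simpler alternative would be to establish $\mu \ll \Hm^n$ on $\Omega$ directly from the $\BG$ condition (for instance via the measure contraction property of \cite{Ota:mcp}) and then conclude from $\Hm^n(S_M \cap \Omega) = 0$, which is immediate from the Hausdorff-dimension bound in Theorem~\ref{thm:str}(2).
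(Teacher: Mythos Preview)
Your approach is far more elaborate than necessary, and the gaps you flag are real obstacles rather than routine verifications. Showing that the symmetric extension $\tilde\mu$ on $\dbl(M)$ inherits $\BG(\kappa,N)$ is not automatic: the radial expansion maps $\Phi_{p,t}$ in $\dbl(M)$ have no reason to respect the reflection, since geodesics from $p$ in one copy can cross $\partial M$ and enter the other copy along paths unrelated to the original $\Phi_{p,t}$ in $M$. The antipodal claim in a singular $\Sigma_x$ likewise needs a genuine rigidity argument (essentially the diameter-$\pi$ rigidity for Alexandrov spaces of curvature $\ge 1$). Your alternative route via $\mu \ll \Hm^n$ is viable in principle but imports nontrivial outside results.

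The paper's proof sidesteps all of this with a single direct observation, and in particular never decomposes $S_M$ or passes to the double. Take any countable dense set $\{p_i\}_{i=1}^\infty \subset M$. By Lemma~\ref{lem:cutmuzero} the union $\bigcup_i \Cut_{p_i} \cap \Omega$ has $\mu$-measure zero, so it suffices to prove $S_M \subset \bigcup_i \Cut_{p_i}$. If $x \notin \Cut_{p_i}$ for every $i$, then for each $i$ some minimal geodesic from $p_i$ passes \emph{through} $x$ as an interior point; blowing up at $x$, these geodesics become straight lines through the vertex of the tangent cone $K_xM$. Density of $\{p_i\}$ in $M$ forces the union of these lines to be dense in $K_xM$, and the Toponogov-type splitting theorem \cite{Mk:line} then yields $K_xM \cong \R^n$, so $x$ is non-singular. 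This argument handles boundary and interior singular points uniformly and uses only Lemma~\ref{lem:cutmuzero} together with a standard structural fact about tangent cones.
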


\begin{proof}
   We find a dense countable subset $\{p_i\}_{i=1}^\infty$ of $M$.
   Lemma \ref{lem:cutmuzero} implies that
   $\bigcup_{i=1}^\infty \Cut_{p_i} \cap \Omega$ is of $\mu$-measure  
zero.
   Thus, it suffices to prove that $S_M \subset \bigcup_{i=1}^\infty  
\Cut_{p_i}$.
   Take any point $x \in M \setminus \bigcup_{i=1}^\infty \Cut_{p_i}$.
   We are going to prove that $x$ is non-singular.
   Since $x$ is not a cut point of $p_i$, there is a minimal geodesic
   $\gamma_i$ from $p_i$ passing through $x$ for each $i$.
   Since the tangent cone $K_x$ is isometric to the magnification
   limit of $M$ around $x$, as the limit of each $\gamma_i$
   we have a straight line
   $\bar\gamma_i$ in $K_x$ passing through the vertex of $K_x$.
   Since $\{p_i\}$ is dense in $M$, the union of the images of all
   $\bar\gamma_i$'s is dense in $K_x$.  By using the splitting theorem
   of Toponogov type (cf.~\cite{Mk:line}), $K_x$ is isometric to
   $\R^n$, i.e., $x$ is non-singular.  This completes the proof.
\end{proof}

\begin{prop} \label{prop:Rad}
   Any locally Lipschitz function on $\Omega$ is differentiable
   $\mu$-a.e.~on $\Omega$.
\end{prop}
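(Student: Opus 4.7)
The plan is to reduce the statement to the non-singular part $\Omega \setminus S_M$ and there invoke Cheeger's generalized Rademacher theorem. By Lemma \ref{lem:muzero}, $\mu(S_M \cap \Omega) = 0$, so it suffices to verify differentiability $\mu$-a.e.\ on the open set $\Omega \setminus S_M$. On this set Theorem \ref{thm:str} supplies a $C^\infty$ manifold structure together with a continuous Riemannian metric $g$ inducing the original distance; hence about every point $x \in \Omega \setminus S_M$ there is a neighborhood $U$ and a chart $\varphi: U \to \R^n$ that is a bi-Lipschitz homeomorphism onto an open subset of $\R^n$. In such coordinates the statement that a locally Lipschitz $u$ is \emph{differentiable} at $x$ reduces to the existence of a Fréchet derivative of $u \circ \varphi^{-1}$ at $\varphi(x)$.

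The next step is to verify the hypotheses of Cheeger's theory for $(\Omega', d, \mu)$. The $\BG(\kappa,N)$ assumption on $\Omega'$ implies the volume doubling condition for metric balls in $\Omega'$, and by the result of Ranjbar-Motlagh \cite{Rm:Poincare} recalled before Lemma \ref{lem:cutmuzero}, the triple supports a $(1,p)$-Poincaré inequality on every ball in $\Omega'$ in the sense of upper gradients. Cheeger's differentiation theorem \cite[Theorem 4.38]{Ch:metmeas} then provides a measurable chart structure $(U_\alpha, \psi_\alpha)$ covering $\Omega'$ up to a $\mu$-null set, with the property that every Lipschitz function on $\Omega'$ admits a $\mu$-a.e.\ differential relative to these charts; a standard localization argument (via Lemma \ref{lem:W-Lip}) transfers this conclusion to locally Lipschitz functions on $\Omega$.

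It remains to identify Cheeger's abstract differentials with the classical ones at non-singular points. At $x \in \Omega \setminus S_M$ the tangent cone $K_x M$ is isometric to $\R^n$, so for any sequence $y_k \to x$ with $t_k := d(x,y_k) \to 0$ the rescaled displacements $t_k^{-1}\log_x y_k$ lie in a chart which is infinitesimally Euclidean with respect to $g$. Consequently, the Riemannian coordinate functions $\varphi$ themselves constitute a Cheeger chart at $x$, and a comparison of the two notions of derivative at $x$ (both characterized uniquely by the first-order asymptotic $u(y) - u(x) = L(\exp_x^{-1} y) + o(d(x,y))$) shows that Cheeger's differential coincides with the usual Fréchet derivative read in the chart $\varphi$. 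Thus $u$ is differentiable $\mu$-a.e.\ on $\Omega \setminus S_M$, completing the proof.

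The main obstacle is precisely this last identification: one must ensure that, at non-singular points, the abstract measurable chart produced by Cheeger's theorem is compatible with the Riemannian chart of Theorem \ref{thm:str}, so that $\mu$-a.e.\ differentiability in Cheeger's sense upgrades to $\mu$-a.e.\ Fréchet differentiability. The rest of the argument — the reduction to the non-singular set and the verification of doubling and Poincaré — is essentially bookkeeping from what has already been established in this section.
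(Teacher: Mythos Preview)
Your overall architecture matches the paper's: invoke Cheeger's theory on $(\Omega,d,\mu)$ and use Lemma~\ref{lem:muzero} to throw away $S_M\cap\Omega$. The difference is in how the passage from Cheeger's abstract conclusion to honest differentiability at non-singular points is made. You go through Cheeger's measurable chart structure (Theorem~4.38) and then attempt to identify the abstract charts with the Riemannian coordinate charts of Theorem~\ref{thm:str}; you correctly flag this identification as the ``main obstacle,'' and your sketch of it (Riemannian coordinates form a Cheeger chart; the two first-order expansions must agree) is plausible but not fully argued---in particular, the claim that $\varphi$ \emph{is} a Cheeger chart at $x$ needs real work, since Cheeger's charts are produced measurably and are not a priori tied to any given smooth atlas.

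The paper sidesteps this entirely by using a different pair of results from \cite{Ch:metmeas}: Theorem~10.2, which says that any locally Lipschitz function is \emph{infinitesimally generalized linear} $\mu$-a.e., and Theorem~8.11, which says that at a point whose tangent cone is Euclidean an infinitesimally generalized linear function is differentiable in the ordinary sense. Since non-singular points are exactly those with $K_xM\cong\R^n$, the combination of these two statements with Lemma~\ref{lem:muzero} finishes the proof in three lines, with no chart-compatibility argument needed. Your route can presumably be completed, but the paper's choice of citations buys a much shorter proof and eliminates precisely the step you identified as delicate.
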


\begin{proof}
   By \cite[Theorem 10.2]{Ch:metmeas},
   any locally Lipschitz function on $\Omega$ is
   infinitesimally generalized linear $\mu$-a.e.
   By Lemma \ref{lem:muzero}, it suffices to consider
   only non-singular points.
   At a non-singular point,
   any infinitesimally generalized linear function
   is differentiable by \cite[Theorem 8.11]{Ch:metmeas}.
   This completes the proof.
\end{proof}

\begin{rem} \label{rem:Rad}
  We can improve Proposition \ref{prop:Rad} as that
  any locally Lipschitz function on $\Omega'$ is differentiable
  $\mu$-a.e.~on $\Omega'$.  This is because the proposition holds for any
  $\Omega$ with $\bar\Omega \subset \Omega'$ and there is an
  increasing sequence $\Omega_i$, $i=1,2,\dots$, such that
  $\bar\Omega_i \subset \Omega$ and $\bigcup_i \Omega_i = \Omega$.
\end{rem}

With the help of Proposition \ref{prop:Rad}, we define a Dirichlet form.
Denote by $C_0^{\Lip}(\Omega)$ the set of Lipschitz functions
with compact support in $\Omega$.
We define a bilinear form by
\[
\mathcal{E}^\mu(u,v) := \int_\Omega \langle\nabla u,\nabla v\rangle  
\; d\mu,
\quad u,v \in C_0^{\Lip}(\Omega),
\]
where $\langle\cdot,\cdot\rangle$ is the Riemannian metric on
$M \setminus S_M$.
Note that $\langle\nabla u,\nabla v\rangle$ is $\mu$-a.e.~defined
by Proposition \ref{prop:Rad}.

\begin{lem} \label{lem:Dir}
   The bilinear form $(\mathcal{E}^\mu,C_0^{\Lip}(\Omega))$ is  
   closable in $L^2(\Omega;\mu)$
   and its closure,
   say $(\mathcal{E}^\mu,W^{1,2}_0(\Omega;\mu))$, coincides with  
Cheeger's
   $(1,2)$-Sobolev space with the Dirichlet boundary condition
   and is a strongly local
   regular Dirichlet form on $L^2(\Omega;\mu)$ in the sense of
   \cite{FOT:Dir}.
\end{lem}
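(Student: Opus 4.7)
The plan is to identify $\mathcal{E}^\mu(u,u) = \int_\Omega \langle \nabla u, \nabla u\rangle\,d\mu$ with Cheeger's energy $\int_\Omega g_u^2\,d\mu$ for every $u \in C_0^{\Lip}(\Omega)$. Once this identification is in hand, both closability and the description of the closure as $W^{1,2}_0(\Omega;\mu)$ follow almost mechanically from the completeness and lower semicontinuity properties of Cheeger's $(1,2)$-Sobolev space \cite{Ch:metmeas}.

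The main step, and the one I expect to be the most delicate, will be to show $|\nabla u| = g_u$ $\mu$-a.e.\ on $\Omega$ for each $u \in C_0^{\Lip}(\Omega)$. Since $\mu$ satisfies the volume doubling condition on $\Omega'$ (a consequence of $\BG(\kappa,N)$) together with a $(1,2)$-Poincar\'e inequality there (from Ranjbar-Motlagh \cite{Rm:Poincare}), Cheeger's theorem \cite[Theorem 6.1]{Ch:metmeas} identifies $g_u$ with the pointwise Lipschitz constant
\[
\Lip u(x) := \limsup_{y\to x}\frac{|u(y)-u(x)|}{d(y,x)}
\]
for $\mu$-a.e.\ $x \in \Omega$. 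At a point $x \in \Omega \setminus S_M$ at which $u$ is differentiable, the tangent cone is Euclidean and the Riemannian metric $g$ is continuous by Theorem \ref{thm:str}(3), so a standard blow-up argument yields $\Lip u(x) = |\nabla u(x)|$. Combining with Proposition \ref{prop:Rad} (differentiability $\mu$-a.e.) and Lemma \ref{lem:muzero} ($\mu(S_M \cap \Omega) = 0$) gives the required identity.

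With $\mathcal{E}^\mu(u,u)$ equal to the Cheeger energy on $C_0^{\Lip}(\Omega)$, closability is immediate: an $\mathcal{E}^\mu$-Cauchy sequence $u_n \in C_0^{\Lip}(\Omega)$ with $u_n \to 0$ in $L^2(\Omega;\mu)$ is Cauchy in Cheeger's $(1,2)$-Sobolev norm, so by completeness of that space it converges there to its $L^2$-limit $0$, forcing $\mathcal{E}^\mu(u_n,u_n) \to 0$. The closure coincides with $W^{1,2}_0(\Omega;\mu)$ because Lipschitz functions compactly supported in $\Omega$ are $(1,2)$-norm dense there, via Cheeger's general density of Lipschitz functions combined with a standard cut-off.

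The remaining structural claims are then routine. Regularity follows since $C_0^{\Lip}(\Omega)$ is uniformly dense in $C_0(\Omega)$ by Stone-Weierstrass and energy-dense in the domain by construction. The Dirichlet (Markov) property reduces to the pointwise inequality $|\nabla(\varphi\circ u)| \le |\nabla u|$ for any normal contraction $\varphi$, valid on $M \setminus S_M$ and hence $\mu$-a.e. Strong locality mirrors the locality of the Riemannian gradient: if $v \in W^{1,2}_0(\Omega;\mu)$ is constant on an open neighborhood of $\supp u$, then approximation by Lipschitz functions gives $\langle \nabla u, \nabla v\rangle = 0$ $\mu$-a.e., hence $\mathcal{E}^\mu(u,v) = 0$.
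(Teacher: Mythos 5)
Your proposal is correct and follows essentially the same route as the paper: identify $g_u$ with the pointwise Lipschitz constant via Cheeger's theorem (the paper cites \cite[Theorem 5.1]{Ch:metmeas}), equate that with $\|\nabla u\|$ at $\mu$-a.e.\ point using Proposition \ref{prop:Rad} and Lemma \ref{lem:muzero}, and then invoke Cheeger's theory (the paper uses \cite[Theorem 4.24]{Ch:metmeas}) for closability and the identification of the closure with the $(1,2)$-Sobolev space. The only difference is that you spell out the regularity, Markov, and strong locality verifications that the paper dismisses as obvious.
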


\begin{proof}
   By \cite[Theorem 5.1]{Ch:metmeas}, the minimal upper gradient
   of a locally Lipschitz function $u : \Omega \to \R$ coincides with
   the local Lipschitz constant of $u$, which is equal to $\|\nabla u\|$
   at any differentiable point of $u$, where
   $\|\cdot\|$ denotes the norm induced from the Riemannian metric on
   $M \setminus S_M$.
   Therefore, by Proposition \ref{prop:Rad},
   the Cheeger's energy of $u \in C_0^{\Lip}(\Omega)$ coincides with
   $\mathcal{E}^\mu(u,u)$, so that, by \cite[Theorem 4.24]{Ch:metmeas},
   $(\mathcal{E}^\mu,C_0^{\Lip}(\Omega))$ is closable
   and its closure $(\mathcal{E}^\mu,W^{1,2}_0(\Omega;\mu))$
   coincides with Cheeger's Sobolev space with Dirichlet boundary  
condition.
   The strong locality, the regularity and the Markovian property
   of $(\mathcal{E}^\mu,W^{1,2}_0(\Omega;\mu))$ are all obvious.
   This completes the proof.
\end{proof}

Let $\rho_\Omega$ denote the \emph{intrinsic metric on $\Omega$  
induced from
the Dirichlet form $(\mathcal{E}^\mu,W^{1,2}_0(\Omega;\mu))$}
(cf.~\cite{BM:Saint}), i.e., for $x,y \in \Omega$,
\[
\rho_\Omega(x,y) := \sup\{\; u(x) - u(y) \mid
u \in W^{1,2}_{0,\loc}(\Omega;\mu) \cap C(\Omega),\ d\mu_{\langle u 
\rangle} \le d\mu
\;\},
\]
where $\mu_{\langle u\rangle}$ is the energy measure of $u$
(cf.~\cite[\S 3.2]{FOT:Dir}).
It is known that $\rho_\Omega$ is a pseudo-metric in general.

\begin{prop} \label{prop:rho-d}
   For any $x_0 \in \Omega$, there exists a neighborhood $B$ of $x_0$
   in $\Omega$ such that $\rho_\Omega = d$ on $B \times B$.
   In particular, $\rho_\Omega$ is a metric which induces
   the same topology of $d$ on $\Omega$.
\end{prop}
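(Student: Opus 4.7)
The first step is to reinterpret the admissible class of test functions in the definition of $\rho_\Omega$ as the class of locally $1$-Lipschitz continuous functions on $\Omega$. By Lemma \ref{lem:Dir}, $(\mathcal{E}^\mu, W^{1,2}_0(\Omega;\mu))$ is Cheeger's $(1,2)$-Sobolev form with Dirichlet boundary condition, and on Lipschitz test functions the energy is $\int \|\nabla u\|^2\,d\mu$ with $\|\nabla u\| = g_u$ $\mu$-a.e.\ by Cheeger's identification of the minimal upper gradient with the local Lipschitz constant. Hence the energy measure of $u \in W^{1,2}_{0,\loc}(\Omega;\mu)$ satisfies $d\mu_{\langle u\rangle} = g_u^2\,d\mu$, and the condition $d\mu_{\langle u\rangle} \le d\mu$ becomes $g_u \le 1$ $\mu$-a.e. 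Combined with Lemma \ref{lem:W-Lip}, this shows that the admissible class in the definition of $\rho_\Omega$ is exactly the class of locally $1$-Lipschitz continuous functions on $\Omega$.

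Next I would pick $r > 0$ with $\bar B(x_0, 3r) \subset \Omega$ and set $B := B(x_0, r)$. For any $x, y \in B$ and any minimal geodesic $\gamma : [0, L] \to M$ from $x$ to $y$, the triangle inequality gives $d(\gamma(s), x_0) \le L + d(x, x_0) < 3r$ for every $s$, so $\gamma$ lies entirely in $\Omega$. The inequality $\rho_\Omega \le d$ on $B \times B$ then follows: any admissible $u$ is locally $1$-Lipschitz on $\Omega$, so by covering the compact curve $\gamma \subset \Omega$ by finitely many $1$-Lipschitz neighborhoods and applying the triangle inequality to a subdivision of $\gamma$, we get $u(x) - u(y) \le L = d(x, y)$. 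For the reverse inequality, I would test against the distance function $u(z) := d(y, z)$, which is globally $1$-Lipschitz on $M$ and hence admissible by the easy direction of Lemma \ref{lem:W-Lip}; this gives $\rho_\Omega(x, y) \ge u(x) - u(y) = d(x, y)$.

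For the \emph{in particular} statement, the same test function $u(\cdot) = d(y, \cdot)$ shows $\rho_\Omega(x, y) \ge d(x, y)$ for arbitrary $x, y \in \Omega$, so $\rho_\Omega$ separates points and is therefore a genuine metric (not merely a pseudo-metric). Local agreement with $d$ on neighborhoods of every point of $\Omega$ then forces the two topologies on $\Omega$ to coincide.

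The step that I expect to require the most care is the identification $d\mu_{\langle u\rangle} = g_u^2\,d\mu$ on all of $W^{1,2}_{0,\loc}(\Omega;\mu)$: it holds transparently on the dense Lipschitz core $C_0^{\Lip}(\Omega)$ via Proposition \ref{prop:Rad}, and one then extends by closability, strong locality, and the approximation scheme built into Lemma \ref{lem:W-Lip} and Cheeger's theory. Once this translation from the Dirichlet-form language to the upper-gradient language is in hand, both inequalities reduce to the standard fact that locally $1$-Lipschitz functions on a geodesic space are $1$-Lipschitz along any curve contained in the domain.
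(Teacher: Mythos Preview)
Your proposal is correct and follows essentially the same route as the paper: identify the admissible test functions with locally $1$-Lipschitz functions via $d\mu_{\langle u\rangle}=g_u^2\,d\mu$ and Lemma~\ref{lem:W-Lip}, obtain $d\le\rho_\Omega$ from the test function $u=d(y,\cdot)$, and obtain $\rho_\Omega\le d$ on a small ball $B$ by noting that minimal geodesics between points of $B$ stay in $\Omega$ so locally $1$-Lipschitz functions are $1$-Lipschitz along them. Your explicit choice of radii and the subdivision argument along $\gamma$ just spell out what the paper records in one sentence.
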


\begin{proof}
   It is easy to prove that $\mu_{\langle u\rangle} = g_u^2\,d\mu$  
for any
   $u \in W^{1,2}_{0,\loc}(\Omega;\mu)$, so that
   $d\mu_{\langle u\rangle} \le d\mu$ is equivalent to $g_u \le 1$
   $\mu$-a.e.
   By Lemma \ref{lem:W-Lip} we have
   \[
   \rho_\Omega(x,y) = \sup\{\; u(x) - u(y) \mid
   \text{$u : \Omega \to \R$ is locally $1$-Lipschitz}\;\}.
   \]

   Let us prove that $d(x,y) \le \rho_\Omega(x,y)$ for any $x,y \in  
\Omega$.
   We fix $x,y \in \Omega$ and set $u(z) := d(y,z)$, $z \in \Omega$.
   Then, $u$ is $1$-Lipschitz and $u(x) - u(y) = d(x,y)$,
   which imply $d(x,y) \le \rho_\Omega(x,y)$.

   Fix a point $x_0 \in \Omega$.
   There is a neighborhood $B$ of $x_0$ in $\Omega$
   such that all minimal geodesics joining two points in $B$ are
   entirely contained in $\Omega$.
   Let us prove that $d(x,y) \ge \rho_\Omega(x,y)$ for any $x,y \in B$.
   Take any locally $1$-Lipschitz function $u : \Omega \to \R$.
   It follows from the condition for $B$ that
   $u$ is (globally) $1$-Lipschitz on $B$, so that, for any $x,y \in B$,
   we have $u(x) - u(y) \le d(x,y)$, which implies
   $\rho_\Omega(x,y) \le d(x,y)$.
   This completes the proof.
\end{proof}

\begin{defn}[$\mu$-subharmonicity] \label{defn:subh}
   A function $u \in W^{1,2}_{0,\loc}(\Omega;\mu)$ is said to be
   \emph{$\mu$-subharmonic} if
   \[
   \int_\Omega \langle \nabla u,\nabla f\rangle \; d\mu \le 0
   \]
   for any nonnegative function $f \in C_0^{\Lip}(\Omega)$.
\end{defn}

Using Lemma \ref{lem:Dir} and Proposition \ref{prop:rho-d},
we prove the maximum principle.

\begin{thm}[Maximum Principle] \label{thm:maxprin}
  Assume that $\Omega$ is connected.
  If a continuous $\mu$-subharmonic function
  $u \in W^{1,2}_{0,\loc}(\Omega;\mu)$
  attains its maximum in $\Omega$, then $u$ is constant on $\Omega$.
\end{thm}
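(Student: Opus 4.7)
The plan is to deduce the statement from the general maximum principle for strongly local regular Dirichlet forms proved by the first-named author in \cite{Kw:maxprinsemi}, applied to the form $(\mathcal{E}^\mu, W^{1,2}_0(\Omega;\mu))$ built in Lemma \ref{lem:Dir}. The three ingredients I need are: that the form satisfies the running hypotheses of \cite{Kw:maxprinsemi}, that its intrinsic metric is compatible with the topology of $\Omega$, and that Definition \ref{defn:subh} agrees with the subharmonicity used there.

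First, Lemma \ref{lem:Dir} already supplies the structural hypotheses: $(\mathcal{E}^\mu, W^{1,2}_0(\Omega;\mu))$ is a strongly local regular Dirichlet form on $L^2(\Omega;\mu)$, with $C_0^{\Lip}(\Omega)$ as a core. Second, the strong maximum principle in the Dirichlet-form setting requires the intrinsic metric $\rho_\Omega$ to be compatible with the original topology, so that topologically connected open sets are connected in $\rho_\Omega$. This is exactly what Proposition \ref{prop:rho-d} provides: $\rho_\Omega$ locally agrees with $d$ and induces the same topology, so the connectedness assumption on $\Omega$ passes over to connectedness with respect to $\rho_\Omega$.

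Third, I would identify $\mu$-subharmonicity with subharmonicity in the Dirichlet-form sense. As noted in the proof of Proposition \ref{prop:rho-d}, for $u \in W^{1,2}_{0,\loc}(\Omega;\mu)$ the energy measure is $d\mu_{\langle u\rangle} = g_u^2\,d\mu$, and by the construction of $(\mathcal{E}^\mu, W^{1,2}_0(\Omega;\mu))$ this is compatible with $\int\langle\nabla u,\nabla v\rangle\,d\mu$ on locally Lipschitz functions. Since $C_0^{\Lip}(\Omega)$ is a core, the defining inequality
\[
\int_\Omega \langle \nabla u,\nabla f\rangle\,d\mu \le 0 \qquad (f\in C_0^{\Lip}(\Omega),\ f\ge 0)
\]
extends by density to all nonnegative $f \in W^{1,2}_0(\Omega;\mu)$, which is the standard notion of subharmonicity used in \cite{Kw:maxprinsemi}. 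Once these three checks are in place, the cited result applies directly: a continuous subharmonic function attaining an interior maximum on a set that is connected with respect to the intrinsic metric must be constant, which is the conclusion.

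The step I expect to be most delicate is not any single calculation but the careful translation between frameworks: namely, verifying that our pointwise/local definition of $\mu$-subharmonicity via $C_0^{\Lip}(\Omega)$ test functions is the one to which \cite{Kw:maxprinsemi} applies, and that the local identification $\rho_\Omega = d$ from Proposition \ref{prop:rho-d} (rather than a global one, which may fail) is enough to transport connectedness. Both difficulties are, however, resolved essentially by the core property in Lemma \ref{lem:Dir} and the local-to-global nature of connectedness, so after these bookkeeping steps the theorem reduces to a direct citation.
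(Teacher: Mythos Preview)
Your outline has the right destination but skips the analytic hypotheses that \cite{Kw:maxprinsemi} actually requires. Being a strongly local regular Dirichlet form with intrinsic metric inducing the original topology is not by itself enough to invoke the strong maximum principle there; one needs an irreducibility/positivity input, and in the paper this is supplied via the \emph{strong Feller property} of the associated semigroup (equivalently, a strictly positive continuous heat kernel). The paper's proof therefore inserts a nontrivial chain you have omitted: from the volume doubling and $(1,p)$-Poincar\'e inequality already available on $\Omega'$, one upgrades to a $(2,2)$-Poincar\'e inequality on $\Omega$ via \cite{HK:SobPoin}, combines this with Proposition~\ref{prop:rho-d} to obtain a parabolic Harnack inequality \`a la \cite{St:DirIII}, and then deduces the existence of a strictly positive locally H\"older continuous heat kernel and the strong Feller property as in \cite{St:heat}. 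Only after this does \cite[Theorems~1.3 and 8.5]{Kw:maxprinsemi} apply.

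By contrast, the two points you flag as ``most delicate'' are essentially routine: the core property of $C_0^{\Lip}(\Omega)$ handles the density step for test functions, and local equality $\rho_\Omega = d$ is obviously sufficient for topological connectedness to transfer. The real content you are missing is the Poincar\'e $\Rightarrow$ Harnack $\Rightarrow$ heat kernel $\Rightarrow$ strong Feller chain; without it, the citation to \cite{Kw:maxprinsemi} is not justified.
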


\begin{proof}
  As is mentioned before, we have
  the volume doubling condition for balls in $\Omega'$
  and a $(1,p)$-Poincar\'e inequality on any ball in $\Omega'$
  for any $p \ge 1$.
  By \cite[Theorem 5.1 and Corollary 9.8]{HK:SobPoin},
  we have a $(2,2)$-Poincar\'e inequality on $\Omega$,
  which together with Proposition \ref{prop:rho-d} implies
  a parabolic Harnack inequality on $\Omega$
  (see \cite[Theorem 3.5]{St:DirIII}).
  Therefore, the same proof as in \cite[Theorem 7.4]{St:heat} works to obtain
  a strictly positive locally H\"older continuous heat kernel
  $p_{\Omega}(t,x,y)$, $(t,x,y)\in(0,\, \infty)\times \Omega\times\Omega$,
  associated to $(\mathcal{E}^\mu,W^{1,2}_0 (\Omega;\mu))$ on
  $L^2(\Omega;\mu)$.
  The method of the proof of  \cite[Proposition 7.5]{St:heat}
  also works to obtain the strong Feller property of the semigroup
  $T_t^{\Omega}f(x):=\int_{\Omega}p_ {\Omega}(t,x,y)f(y)\,\mu(dy)$,
  where $f$ is a bounded Borel function on $\Omega$.
  Owing to the strong maximum principle due to the
  first named author \cite[Theorems 1.3 and 8.5 with the last remark
  in Example 8.2]{Kw:maxprinsemi}, we have the
  theorem.
\end{proof}

\section{Laplacian Comparison Theorem}
\label{sec:LapComp}

The purpose of this section is to prove the following theorem.
We set $\cot_\kappa(r) := s_\kappa'(r)/s_\kappa(r)$ for the function
$s_\kappa$ defined in \S\ref{sec:intro}.

\begin{thm}[Laplacian Comparison Theorem] \label{thm:LapComp}
   Let $M$ be an Alexandrov space, $\mu$ a positive Radon measure on $M$
   with full support, and $\Omega \subset M$ an open subset.
   If $\mu$ satisfies $\BG(\kappa,N_\Omega)$ on $\Omega$
   for two real numbers $N_\Omega \ge 1$ and $\kappa$, then
   we have
   \begin{equation}
     \label{eq:LapComp}
     \int_M \langle\nabla r_p,\nabla f\rangle \; d\mu
     \ge
     \int_M \{ -(N_\Omega-1)\cot_\kappa \circ r_p \}\, f \;d\mu
   \end{equation}
   for any point $p \in M$ and
   for any nonnegative function $f \in C_0^{\Lip}(\Omega \setminus \{p\})$.
\end{thm}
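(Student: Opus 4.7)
The strategy is to extract the infinitesimal content of $\BG(\kappa,N_\Omega)$ by taking a one-sided derivative at $t=1$, then rescaling the test function. Write the BG inequality as $F(t):=A(t)-B(t)\ge 0$, where
\[
A(t):=\int_{W_{p,t}} f\circ\Phi_{p,t}\,d\mu,\qquad B(t):=\int_M J_t\, f\,d\mu,\qquad J_t(x):=\frac{t\,s_\kappa(t\,r_p(x))^{N_\Omega-1}}{s_\kappa(r_p(x))^{N_\Omega-1}}.
\]
Since $\Phi_{p,1}=\mathrm{id}$, $W_{p,1}=M$, and $J_1\equiv 1$, one has $F(1)=0$; because $F(t)\ge 0$ for $t\in(0,1]$, the backward Newton quotient satisfies $\limsup_{t\to 1^-}(F(t)-F(1))/(t-1)\le 0$.

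Next I would compute the one-sided derivatives of $A$ and $B$ at $t=1$. Direct differentiation of $J_t$ and dominated convergence (using that $\supp f$ is compact in $\Omega\setminus\{p\}$, so $r_p$ is bounded below away from zero on $\supp f$) give
\[
\lim_{t\to 1^-}\frac{B(t)-B(1)}{t-1}=\int_M \bigl[1+(N_\Omega-1)\,r_p\cot_\kappa(r_p)\bigr]\,f\,d\mu.
\]
For $A$, the decomposition
\[
A(t)-A(1)=\int_{W_{p,t}}\bigl[f\circ\Phi_{p,t}-f\bigr]\,d\mu - \int_{M\setminus W_{p,t}}f\,d\mu
\]
together with $f\ge 0$ (so the second integral is nonnegative and only helpful) reduces the question to the first integral. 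At any $y$ where $r_p$ and $f$ are both differentiable---a $\mu$-conull set by Proposition~\ref{prop:Rad}, Lemma~\ref{lem:muzero}, and \cite[Lemma~4.1]{OS:rstralex}---the point $\Phi_{p,t}(y)$ is obtained by moving $y$ outward along the radial geodesic by distance $r_p(y)(1/t-1)$, so $f\circ\Phi_{p,t}(y)-f(y)=r_p(y)(1/t-1)\langle\nabla f,\nabla r_p\rangle(y)+o(1-t)$, and the integrand is uniformly dominated by $\Lip(f)\cdot r_p/t$ on $\supp f$. Since $W_{p,t}\nearrow M\setminus(\Cut_p\cup\{p\})$ and $\mu(\Cut_p\cap\Omega)=0$ by Lemma~\ref{lem:cutmuzero}, dominated convergence yields $\liminf_{t\to 1^-}(A(t)-A(1))/(t-1)\ge -\int_M r_p\langle\nabla f,\nabla r_p\rangle\,d\mu$. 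Combining these three bounds through $\liminf\le\limsup$ produces
\[
\int_M r_p\,\langle\nabla f,\nabla r_p\rangle\,d\mu \ge -\int_M \bigl[1+(N_\Omega-1)\,r_p\cot_\kappa(r_p)\bigr]\,f\,d\mu.
\]

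The last step is a test-function rescaling: I apply this weighted inequality to $\tilde f:=f/r_p$, which is again a nonnegative Lipschitz function with compact support in $\Omega\setminus\{p\}$ since $r_p$ is bounded away from zero on $\supp f$. The $\mu$-a.e.~identity $r_p\langle\nabla(f/r_p),\nabla r_p\rangle = \langle\nabla f,\nabla r_p\rangle - f/r_p$ (valid because $|\nabla r_p|=1$ off the $\mu$-null set $S_M\cup\Cut_p$) causes the $\int f/r_p\,d\mu$ terms on both sides to cancel, leaving exactly \eqref{eq:LapComp}. The main technical obstacle is the domain discontinuity $\lim_{t\to 1^-}W_{p,t}=M\setminus(\Cut_p\cup\{p\})\subsetneq W_{p,1}=M$, which is controlled only by Lemma~\ref{lem:cutmuzero} (itself derived from $\BG(\kappa,N_\Omega)$); the need to rescale by $1/r_p$, in turn, explains why the statement is restricted to test functions supported away from $p$.
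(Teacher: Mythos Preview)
Your proposal is correct and follows essentially the same approach as the paper: both extract the content of $\BG(\kappa,N_\Omega)$ by a one-sided difference quotient at $t=1$, use the Lipschitz bound $|f\circ\Phi_{p,t}-f|\le \Lip(f)\,r_p(1-t)/t$ for dominated convergence, invoke $\mu(\Cut_p\cap\Omega)=0$ to handle the domain $W_{p,t}$, arrive at the weighted inequality $\int r_p\langle\nabla r_p,\nabla f\rangle\,d\mu \ge -\int[1+(N_\Omega-1)r_p\cot_\kappa(r_p)]f\,d\mu$, and then rescale the test function by $1/r_p$. The only difference is organizational: the paper computes $\int\langle r_p\nabla r_p,\nabla f\rangle\,d\mu$ directly as $\lim_{t\to 1^-}\int_{W_{p,t}}(f\circ\Phi_{p,t}-f)/(1-t)\,d\mu$ and then bounds it below via BG, whereas you split the BG inequality as $A(t)-B(t)\ge 0$ and chain $\liminf\le\limsup$ on the Newton quotient of $A$; these are the same computation.
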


Note that $r_p$ and $f$ above are differentiable $\mu$-a.e.~on $\Omega$
by Proposition \ref{prop:Rad} and Remark \ref{rem:Rad}.

Let $V$ be a function on $M$ with a certain regularity condition.
For the measure $d\mu(x) = e^{-V(x)}\, d\Hm^n(x)$, we define
\[
\Delta_\mu := \Delta + \nabla V = -e^{V}\divv(e^{-V}\nabla\cdot),
\]
where $\Delta$ is the nonnegative Laplacian and $\nabla V$ is the
gradient vector field of $V$, considered to be a directional
derivative.  The inequality \eqref{eq:LapComp} is a weak form of the  
formal
inequality
\begin{equation}
   \label{eq:LapComp-ptw}
   \Delta_\mu r_p \ge -(N_\Omega-1)\cot_\kappa \circ r_p
\end{equation}
on $\Omega \setminus \{p\}$, and $-(N_\Omega-1)\cot_\kappa \circ
r_p$ is the Laplacian of the distance function on an
$N_\Omega$-dimensional complete simply connected space form of
constant curvature $\kappa$, provided that $N_\Omega$ is an integral
number with $N_\Omega \ge 2$.
We do not know if the pointwise inequality \eqref{eq:LapComp-ptw}
implies the weak form \eqref{eq:LapComp} in general.
However, if $M$ is a singular Riemannian space and if $V$ is a $C^2$
function, then \eqref{eq:LapComp-ptw} implies \eqref{eq:LapComp}.
For the proof of this, we first prove \eqref{eq:LapComp-ptw}
in the sense of barrier by the same way as in \cite{EH:split}
and then prove \eqref{eq:LapComp}.  The details are omitted here.

Since, for an $n$-dimensional Alexandrov space of
curvature $\ge \kappa$, the $n$-dimensional Hausdorff measure
$\Hm^n$ satisfies $\BG(\kappa,n)$ (see \cite{KwSy:BG-Alex}),
the above Laplacian Comparison Theorem (Theorem \ref{thm:LapComp})
leads us to the following.

\begin{cor} \label{cor:LapComp}
   If $M$ is an $n$-dimensional Alexandrov space of
   curvature $\ge \kappa$, then for any $p \in M$ we have
   $\Delta r_p \ge -(n-1)\cot_\kappa \circ r_p$ on $M\setminus \{p\}$
   in the weak sense.
\end{cor}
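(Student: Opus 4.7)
The strategy is to apply the Bishop--Gromov inequality $\BG(\kappa,N_\Omega)$ not to $f$ itself but to the auxiliary test function $g := f/r_p$; since $\supp f$ is compact in $\Omega\setminus\{p\}$, the function $g$ again lies in $C_0^{\Lip}(\Omega\setminus\{p\})$. For $t\in(0,1]$ set
\[
h(t) := \int_{W_{p,t}} g\circ\Phi_{p,t}\;d\mu - \int_M \frac{t\,s_\kappa(tr_p)^{N_\Omega-1}}{s_\kappa(r_p)^{N_\Omega-1}}\,g\;d\mu.
\]
The BG inequality gives $h(t)\ge 0$ on $(0,1]$, while $W_{p,1}=M$, $\Phi_{p,1}$ is the identity, and the Jacobian-like factor equals $1$ at $t=1$, so $h(1)=0$. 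Consequently $\limsup_{t\to 1^-}(h(t)-h(1))/(t-1)\le 0$.

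Next I compute the left derivatives of both integrals at $t=1$. The weight on the right is smooth in $t$ with
\[
\frac{d}{dt}\bigg|_{t=1}\frac{t\,s_\kappa(tr)^{N_\Omega-1}}{s_\kappa(r)^{N_\Omega-1}} = 1+(N_\Omega-1)\,r\,\cot_\kappa r,
\]
so dominated convergence (the integrand is bounded on $\supp g$) yields $\int_M [1+(N_\Omega-1)r_p\cot_\kappa r_p]\,g\,d\mu$ for the derivative of the second integral. For the first integral, parametrize $\Phi_{p,t}(y)=c_y(r_p(y)/t)$ along the unit-speed geodesic $c_y$ from $p$ through $y$. By Lemmas~\ref{lem:cutmuzero} and~\ref{lem:muzero}, the set $M\setminus(\Cut_p\cup S_M\cup\{p\})$ has full $\mu$-measure; at any such $y$ the minimal geodesic extends past $y$, so $y\in W_{p,t}$ for all $t$ in a neighborhood of $1$ and $c_y'(r_p(y))=\nabla r_p(y)$. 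Combined with differentiability of $g$ at $\mu$-a.e.\ such $y$ (Proposition~\ref{prop:Rad}),
\[
\lim_{t\to 1^-}\frac{g(\Phi_{p,t}(y))-g(y)}{t-1} = -r_p(y)\,\langle\nabla g,\nabla r_p\rangle(y).
\]
The difference quotient is bounded by $\Lip(g)\cdot r_p(y)/t$ on a compact subset of $\Omega\setminus\{p\}$ uniform in $t$ near $1$, providing an integrable dominator. Hence the left derivative of the first integral at $t=1$ equals $-\int_M r_p\langle\nabla g,\nabla r_p\rangle\,d\mu$.

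Assembling these into the inequality $h'(1^-)\le 0$ yields
\[
\int_M r_p\,\langle\nabla g,\nabla r_p\rangle\,d\mu \;\ge\; -\int_M [1+(N_\Omega-1)r_p\cot_\kappa r_p]\,g\,d\mu.
\]
Now substitute $g=f/r_p$. Expanding $\nabla g=r_p^{-1}\nabla f-f r_p^{-2}\nabla r_p$ and using $|\nabla r_p|^2=1$ $\mu$-a.e.\ on $M\setminus(S_M\cup\Cut_p\cup\{p\})$ gives $r_p\,\langle\nabla g,\nabla r_p\rangle=\langle\nabla f,\nabla r_p\rangle-f/r_p$, while $[1+(N_\Omega-1)r_p\cot_\kappa r_p]\,g=f/r_p+(N_\Omega-1)f\cot_\kappa r_p$. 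The $f/r_p$ terms on the two sides cancel, leaving \eqref{eq:LapComp} verbatim.

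The main obstacle is justifying the differentiation under the integral sign for the first integral in $h(t)$, where both the domain $W_{p,t}$ and the integrand depend on $t$. The $t$-dependence of the domain is harmless because $\Cut_p$ has $\mu$-measure zero (Lemma~\ref{lem:cutmuzero}), so $\mu$-a.e.\ $y$ stays inside $W_{p,t}$ for $t$ near $1$; the pointwise derivative of $g\circ\Phi_{p,t}$ exists $\mu$-a.e.\ because the combined singular-plus-cut set is $\mu$-null (Lemmas~\ref{lem:muzero} and~\ref{lem:cutmuzero}); and the Lipschitz continuity of $g$ together with compactness of $\supp g\subset\Omega\setminus\{p\}$ supplies a uniform dominator. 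The decisive trick is to test with $g=f/r_p$ rather than with $f$ directly: this precisely cancels the weight $r_p$ that the chain rule through $\Phi_{p,t}(y)=c_y(r_p(y)/t)$ would otherwise introduce, so that the spurious $r_p$-weights disappear and the sharp Laplacian comparison emerges.
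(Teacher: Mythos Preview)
Your argument is correct and is essentially the paper's own proof of Theorem~\ref{thm:LapComp}, repackaged via the function $h(t)$ and the one-sided derivative $h'(1^-)\le 0$ rather than the paper's direct limit computation with $F_t$; the substitution $g=f/r_p$ (equivalently, the paper's $\hat f=r_pf$) and the dominated-convergence justifications match step for step. Note only that what you have written is a proof of the general Theorem~\ref{thm:LapComp}; the Corollary itself follows from it by the one extra ingredient you did not state, namely that for an $n$-dimensional Alexandrov space of curvature $\ge\kappa$ the Hausdorff measure $\Hm^n$ satisfies $\BG(\kappa,n)$ (this is the content of \cite{KwSy:BG-Alex}), which lets you take $\mu=\Hm^n$ and $N_\Omega=n$.
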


Since the Riemannian metric on an Alexandrov space is not continuous
on any singular point, a standard proof of the Laplacian comparison  
theorem
for Riemannian manifolds does not work.
Renesse \cite{Rs:compalex} proved Corollary \ref{cor:LapComp}
under some additional condition.
In the case of $\mu = \Hm^n$ with $\BG(\kappa,n)$,
another different proof using a version of Green formula can be seen in
our previous paper \cite{KwSy:lapcomp}.

\begin{proof}[Proof of Theorem \ref{thm:LapComp}]
  Let $f \in C_0^{\Lip}(\Omega \setminus \{p\})$ be any
  nonnegative function.
  By Proposition \ref{prop:Rad} and Remark \ref{rem:Rad},
  $f$ and $r$ are differentiable $\mu$-a.e.~on $\Omega$.
  It follows from $t\,r_p(\Phi_{p,t}(x)) = r_p(x)$ that,
  for $\mu$-almost all $x \in \Omega$,
  \[
  \frac{d}{dt} \Phi_{p,t}(x)\Bigr|_{t=1} = -r_p(x) \nabla r_p(x)
  \]
  and so
  \begin{align*}
    \langle r_p\nabla r_p, \nabla f \rangle
    = -\left\langle \der{}{t}\Phi_{p,t}(x)\Bigr|_{t=1},\nabla f\right 
    \rangle
    = -\der{}{t} f\circ\Phi_{p,t}(x)\Bigr|_{t=1}.
  \end{align*}
  For $0 < t < 1$ we define a function $F_t : M \to \R$ by
  \[
  F_t(x) :=
  \begin{cases}
    (f\circ\Phi_{p,t}(x)-f(x))/(1-t) &\text{for $x \in W_{p,t}$},\\
    0 &\text{for $x \in M \setminus W_{p,t}$}.
  \end{cases}
  \]
  Then we have
  \[
  \lim_{t\to 1-0} F_t(x) = -\der{}{t}f\circ\Phi_{p,t}(x)\Bigr|_{t=1}
  \]
  for $\mu$-almost all $x \in \Omega$.
  It follows from $d(x,\Phi_{p,t}(x)) = (1-t)r_p(x)/t$,
  $x \in W_{p,t}$, that $|F_t(x)| \le L\,r_p(x)/t$
  for all $t \in (\,0,1\,)$ and $x \in M$,
  where $L$ is a Lipschitz constant of $f$.
  Thus, the dominated convergence theorem implies
  \begin{align*}
    &\int_M \langle r_p\nabla r_p, \nabla f \rangle \; d\mu
    = -\int_M \der{}{t} f\circ\Phi_{p,t}(x)\Bigr|_{t=1}\;d\mu(x)\\
    &= \lim_{t\to 1-0} \int_M F_t(x) \; d\mu(x)\\
    &= \lim_{t\to 1-0} \left[
      \int_{W_{p,t}} \frac{f\circ\Phi_{p,t}(x)}{1-t}\;d\mu(x)
      -\int_{W_{p,t}} \frac{f(x)}{1-t}\;d\mu(x) \right]\\
    \intertext{and by $\BG(\kappa,N_\Omega)$ and $f(x)/(1-t) \ge 0$,}
    &\ge \liminf_{t \to 1-0} \Bigl[
    \int_M \frac{t\,s_\kappa(t\,r_p(x))^{N_\Omega-1} f(x)}
    {(1-t)s_\kappa(r_p(x))^{N_\Omega-1}}\; d\mu(x)
    -\int_M \frac{f(x)}{1-t}\;d\mu(x) \Bigr]\\
    &\ge \int_M \liminf_{t\to 1-0}
    \frac{t\,s_\kappa(t\,r_p(x))^{N_\Omega-1}
      - s_\kappa(r_p(x))^{N_\Omega-1}}
    {(1-t)s_\kappa(r_p(x))^{N_\Omega-1}}\,f(x)\;d\mu(x)\\
    &= -\int_M \der{}{t} \{ t\,s_\kappa(t\,r_p(x))^{N_\Omega-1} \} 
    \Bigr|_{t=1}\;
    \frac{f(x)}{s_\kappa(r_p(x))^{N_\Omega-1}}\;d\mu(x)\\
    &= \int_M \{-1-(N_\Omega-1)r_p(x)\cot_\kappa(r_p(x))\}\,f(x) \; d 
    \mu(x).
  \end{align*}
  By $\nabla(r_pf) = f\nabla r_p + r_p\nabla f$,
  we see that
  \[
  \langle r_p\nabla r_p,\nabla f\rangle = \langle\nabla r_p,\nabla 
  (r_pf)\rangle
  -f \ \ \text{$\mu$-a.e.}
  \]
  and therefore,
  \begin{equation}
    \label{eq:rf}
    \int_M \langle\nabla r_p,\nabla(r_pf)\rangle \; d\mu
    \ge \int_M \{ -(N_\Omega-1)\cot_\kappa(r_p(x))\}\, r_p(x) f(x)  
    \; d\mu(x).
  \end{equation}
  We now give any nonnegative function
  $\hat f \in C_0^{\Lip}(\Omega \setminus \{p\})$.
  Set
  $f(x) := \hat f(x)/r_p(x)$ for $x \neq p$ and $f(p) := 0$.
  Then, $f : M \to \R$ is a nonnegative function which belongs to
  $C_0^{\Lip}(\Omega \setminus \{p\})$.
  \eqref{eq:rf} implies \eqref{eq:LapComp} for $\hat f$.
  This completes the proof.
\end{proof}

In our paper \cite{KMS:lap},
we proved for an Alexandrov space $M$
the existence of the heat kernel of $M$ and the discreteness of
the spectrum of the Laplacian (the generator of the Dirichlet energy  
form)
on a relatively compact domain in $M$.
As applications to Theorem \ref{thm:LapComp},
we have the following heat kernel
and first eigenvalue comparison results,
which generalize the results of Cheeger-Yau \cite{CY:heatkernel} and
Cheng \cite{Ch:eigencomp}.

$B(p,r)$ denotes the metric ball centered at $p$ and of radius $r$
and $M^n(\kappa)$ an $n$-dimensional
complete simply connected space form of curvature $\kappa$.

\begin{cor} \label{cor:heatcomp}
  Let $M$ be an $n$-dimensional Alexandrov space and assume that
  $\Hm^n$ satisfies $\BG(\kappa,n)$.
  Let $\Omega \subset M$ be an open subset containing $B(p,r)$ for
  a number $r > 0$.
  Denote by $h_t : \Omega \times \Omega \to \R$, $t > 0$,
  the heat kernel on $\Omega$ with Dirichlet boundary condition,
  and by $\bar h_t : B(\bar p,r) \times B(\bar p,r) \to \R$
  that on $B(\bar p,r)$
  for a point $\bar p \in M^n(\kappa)$.
  Then, for any $t > 0$ and $q \in B(p,r)$, we have
  \[
  h_t(p,q) \ge \bar h_t(\bar p,\bar q),
  \]
  where $\bar q \in M^n(\kappa)$ is a point such that
  $d(\bar p,\bar q) = d(p,q)$.
\end{cor}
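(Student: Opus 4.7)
The plan is to follow the classical Cheeger--Yau argument \cite{CY:heatkernel}: lift the Dirichlet heat kernel on the model ball $B(\bar p, r) \subset M^n(\kappa)$ to $M$ via the distance function from $p$, use Theorem \ref{thm:LapComp} to show that this lift is a weak subsolution of the heat equation on $B(p, r)$, and then invoke parabolic comparison in the Dirichlet form framework.

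First, by rotational symmetry of $M^n(\kappa)$ about $\bar p$, we may write $\bar h_t(\bar p, \bar q) = H(t, d(\bar p, \bar q))$ for a smooth function $H$ on $(0, \infty) \times [0, r)$ that vanishes as the spatial argument $\rho$ tends to $r$ and satisfies the radial heat equation
\[
\partial_t H = H'' + (n-1)\cot_\kappa(\rho)\, H'
\]
on $(0, \infty) \times (0, r)$, primes denoting spatial derivatives. A classical fact (used already in the original Cheeger--Yau work) is that this Dirichlet heat kernel is radially non-increasing, so $H'(t, \rho) \le 0$. Define the lift $\tilde u(t, q) := H(t, r_p(q))$ on $(0, \infty) \times B(p, r)$.

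The crucial step is to verify the weak subsolution inequality
\[
\int_M (\partial_t \tilde u)\, \phi\; d\Hm^n + \int_M \langle \nabla \tilde u,\, \nabla \phi\rangle\; d\Hm^n \le 0
\]
for each $t > 0$ and each nonnegative $\phi \in C_0^{\Lip}(B(p, r)\setminus \{p\})$. Since $\nabla \tilde u = H'(t, r_p)\, \nabla r_p$ and $|\nabla r_p| = 1$ $\Hm^n$-a.e., Proposition \ref{prop:Rad} together with Remark \ref{rem:Rad} yields the chain rule identity
\[
H'(t, r_p)\, \langle \nabla r_p,\, \nabla \phi\rangle = \langle \nabla r_p,\, \nabla(H'(t, r_p)\, \phi)\rangle - H''(t, r_p)\, \phi
\]
$\Hm^n$-a.e. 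Because $-H'(t, r_p)\, \phi$ is nonnegative with compact support in $B(p, r)\setminus\{p\}$, applying Theorem \ref{thm:LapComp} to it gives
\[
\int_M \langle \nabla r_p,\, \nabla(H'(t, r_p)\, \phi)\rangle\; d\Hm^n \le -\int_M (n-1)\cot_\kappa(r_p)\, H'(t, r_p)\, \phi\; d\Hm^n.
\]
Combining these identities with $H'' = \partial_t H - (n-1)\cot_\kappa(\rho)\, H'$ produces an exact cancellation of the $\cot_\kappa$ contributions and delivers the subsolution inequality.

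With the subsolution property in hand, I would invoke parabolic comparison for the heat semigroup of $(\mathcal{E}^{\Hm^n}, W^{1,2}_0(B(p, r); \Hm^n))$ constructed in \cite{KMS:lap}, which is order preserving by Markovianity, to conclude $\tilde u(t, q) \le h_t^{B(p, r)}(p, q)$; domain monotonicity $h_t^{B(p, r)} \le h_t$ on $B(p, r) \times B(p, r)$ then finishes the argument. The main obstacle I foresee is the matching of initial data at $t = 0^+$: both $\tilde u$ and $h_t^{B(p, r)}(p,\cdot)$ concentrate at $p$, but against different reference measures ($\vol_{M^n(\kappa)}$ versus $\Hm^n$). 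Following Cheeger--Yau, one handles this by testing the two semigroups against radial initial data $f = F \circ r_p$ on $M$ and $F \circ d(\bar p, \cdot)$ on $M^n(\kappa)$ for a common radial profile $F$, and then passing to the limit $F \to \delta$. The singular set $S_M$ and the cut locus $\Cut_p$ inside $B(p, r)$ cause no difficulty, since they are $\Hm^n$-null by Lemmas \ref{lem:cutmuzero} and \ref{lem:muzero}, so all the pointwise manipulations above are valid $\Hm^n$-a.e.
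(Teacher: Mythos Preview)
Your proposal is correct and follows essentially the same route as the paper: the paper simply cites Renesse \cite[Theorem II]{Rs:compalex}, whose argument is precisely the Cheeger--Yau transplantation you describe, with Theorem~\ref{thm:LapComp} supplying the Laplacian comparison input. Your write-up in fact contains more detail than the paper gives; the only small point left implicit is the passage from test functions $\phi \in C_0^{\Lip}(B(p,r)\setminus\{p\})$ to general $\phi \in C_0^{\Lip}(B(p,r))$, which is handled by a standard capacity-zero cutoff near $p$.
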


\begin{cor} \label{cor:eigen}
  Let $M$ be an $n$-dimensional Alexandrov space and $r > 0$ a real
  number.  Assume that $\Hm^n$ satisfies
  $\BG(\kappa,n)$.
  Denote by $\lambda_1(B(p,r))$ the first eigenvalue of
  the Laplacian on $B(p,r)$ with Dirichlet boundary condition,
  and by $\lambda_1(B(\bar p,r))$ that on $B(\bar p,r)$ for
  a point $\bar p \in M^n(\kappa)$.
  Then we have
  \[
  \lambda_1(B(p,r)) \le \lambda_1(B(\bar p,r)).
  \]
\end{cor}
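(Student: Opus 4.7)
The plan is to transplant the radial first Dirichlet eigenfunction of the model ball $B(\bar p,r) \subset M^n(\kappa)$ to $B(p,r)$ and use it as a trial function for the Rayleigh quotient, in the style of Cheng. Let $\varphi : [\,0,r\,] \to [\,0,+\infty\,)$ be the radial first Dirichlet eigenfunction on $B(\bar p,r)$, so that
\[
-\varphi''(s) - (n-1)\cot_\kappa(s)\,\varphi'(s) = \lambda_1(B(\bar p,r))\,\varphi(s),
\]
with $\varphi'(0)=0$, $\varphi(r)=0$, $\varphi > 0$ on $[\,0,r\,)$, and (by standard ODE analysis) $\varphi'(s) \le 0$ on $[\,0,r\,]$. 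Define the transplant $\bar\varphi(x) := \varphi(r_p(x))$ on $B(p,r)$. Because $\varphi$ is $C^2$ with bounded derivatives and $r_p$ is $1$-Lipschitz, $\bar\varphi$ is Lipschitz, vanishes on $\partial B(p,r)$, and lies in $W^{1,2}_0(B(p,r);\Hm^n)$.

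The core step is to establish the weak differential inequality
\[
\int_M \langle\nabla\bar\varphi,\nabla\psi\rangle\,d\Hm^n \le \lambda_1(B(\bar p,r))\int_M \bar\varphi\,\psi\,d\Hm^n \qquad (\ast)
\]
for every nonnegative $\psi \in C_0^{\Lip}(B(p,r) \setminus \{p\})$. Applying the chain rule (valid $\Hm^n$-a.e.\ by Proposition \ref{prop:Rad}) gives
\[
\int_M \langle\nabla\bar\varphi,\nabla\psi\rangle\,d\Hm^n = \int_M \langle\nabla r_p,\nabla(\varphi'(r_p)\psi)\rangle\,d\Hm^n - \int_M \varphi''(r_p)\,\psi\,d\Hm^n,
\]
and since $\varphi'(r_p) \le 0$, the function $-\varphi'(r_p)\psi$ is a nonnegative element of $C_0^{\Lip}(B(p,r) \setminus \{p\})$. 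Feeding it into Theorem \ref{thm:LapComp} yields
\[
\int_M \langle\nabla r_p,\nabla(\varphi'(r_p)\psi)\rangle\,d\Hm^n \le -\int_M (n-1)\cot_\kappa(r_p)\,\varphi'(r_p)\,\psi\,d\Hm^n,
\]
and combining with the ODE for $\varphi$ produces $(\ast)$.

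To conclude, we wish to insert $\psi = \bar\varphi$, but $\bar\varphi$ does not vanish at $p$. We use $\psi_\epsilon := \bar\varphi\,\eta_\epsilon$ with a Lipschitz cutoff $\eta_\epsilon$ that is $0$ on $B(p,\epsilon)$, $1$ off $B(p,2\epsilon)$, and satisfies $|\nabla\eta_\epsilon| \le 2/\epsilon$. Inserting $\psi_\epsilon$ in $(\ast)$, the cross term is bounded in absolute value by $(C/\epsilon)\,\Hm^n(B(p,2\epsilon))$, which vanishes as $\epsilon \to 0$ thanks to the volume estimate $\Hm^n(B(p,2\epsilon)) \le C\epsilon^n$ provided by $\BG(\kappa,n)$ (the case $n=1$ is elementary). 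Passing to the limit yields $\int_M |\nabla\bar\varphi|^2\,d\Hm^n \le \lambda_1(B(\bar p,r))\int_M \bar\varphi^2\,d\Hm^n$, and the Rayleigh characterization of $\lambda_1(B(p,r))$ as the infimum of the Rayleigh quotient over $W^{1,2}_0(B(p,r);\Hm^n)$ gives the desired inequality. The principal technical hurdle is justifying the chain rule for $\bar\varphi = \varphi \circ r_p$ in the Cheeger-Sobolev sense on a singular space together with controlling the cutoff near $p$; the key geometric input is exactly the weak Laplacian comparison of Theorem \ref{thm:LapComp}, which replaces the pointwise Riemannian comparison that drives Cheng's original proof.
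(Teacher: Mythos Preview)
Your argument is correct and is precisely the classical Cheng transplantation method, carried out with the weak Laplacian comparison Theorem~\ref{thm:LapComp} in place of the smooth pointwise one; this is exactly the route the paper indicates by deferring to Renesse \cite[Corollary~1]{Rs:compalex}. The technical points you flag (chain rule $\mu$-a.e., the cutoff at $p$ controlled by $\Hm^n(B(p,2\epsilon))=O(\epsilon^n)$, and membership of $\bar\varphi$ in $W^{1,2}_0$) are all handled correctly.
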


Once we have Theorem \ref{thm:LapComp},
the proofs of Corollaries \ref{cor:heatcomp} and \ref{cor:eigen}
are the same as of Renesse
\cite[Theorem II and Corollary 1]{Rs:compalex}.
We verify that the local $(L^1,1)$-volume regularity
is not needed in the proof of \cite[Theorem II]{Rs:compalex}.
We also obtain a Brownian motion comparison theorem
in the same way as in \cite{Rs:compalex}.

\section{Splitting Theorem} \label{sec:splitting}

We prove the Topological Splitting Theorem (Theorem \ref{thm:splitting})
following the idea of Cheeger-Gromoll \cite{CG:split}.
However, we still need some extra lemmas to fit the discussions of
\cite{CG:split} to Alexandrov spaces.

Let $M$ be a non-compact Alexandrov space and
$\gamma$ a \emph{ray} in $M$, i.e., a geodesic defined on $[\,0,+ 
\infty\,)$
such that $d(\gamma(s),\gamma(t)) = |s-t|$ for any $s,t \ge 0$.

\begin{defn}[Busemann function]
  The \emph{Busemann function $b_\gamma : M \to \R$ for $\gamma$}
  is defined by
  \[
  b_\gamma(x) := \lim_{t \to +\infty} \{ t - d(x,\gamma(t)) \},
  \quad x \in M.
  \]
\end{defn}

It follows from the triangle inequality that
$t - d(x,\gamma(t))$ is monotone non-decreasing in $t$,
so that the limit above exists.
$b_\gamma$ is a $1$-Lipschitz function.

\begin{defn}[Asymptotic relation]
  We say that a ray $\sigma$ in $M$ is \emph{asymptotic to $\gamma$}
  if there exist a sequence $t_i \to +\infty$, $i = 1,2,\dots$,
  and minimal geodesics
  $\sigma_i : [\,0,l_i\,] \to M$ with $\sigma_i(l_i) = \gamma(t_i)$
  such that $\sigma_i$ converges to $\sigma$ as $i \to \infty$,
  (i.e., $\sigma_i(t) \to \sigma(t)$ for each $t$).
\end{defn}

For any point in $M$, there is a ray asymptotic to $\gamma$
from the point.
Any subray of a ray asymptotic to $\gamma$ is asymptotic to $\gamma$.
By the same proof as for Riemannian manifolds
(cf.~\cite[Theorem 3.8.2(3)]{SST:totcurv}), for any ray $\sigma$  
asymptotic
to $\gamma$, we have
\begin{equation}
  \label{eq:blinear}
  b_\gamma \circ \sigma(s) = s + b_\gamma \circ \sigma(0)
  \quad\text{for any $s \ge 0$.}
\end{equation}

\begin{lem} \label{lem:diff}
  Let $f : M \to \R$ be a $1$-Lipschitz function and
  $u,v \in \Sigma_pM$ two directions at a point $p \in M$.
  If the directional derivative of $f$ to $u$ is equal to $1$
  and that to $v$ equal to $-1$,
  then the angle between $u$ and $v$ is equal to $\pi$.
\end{lem}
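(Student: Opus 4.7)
\medskip

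My plan is to pit the $1$-Lipschitz condition on $f$ against the extremal infinitesimal slopes along $u$ and $v$, forcing the associated comparison triangle at $p$ to degenerate as it shrinks.

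First I choose minimal geodesics $\gamma_u,\gamma_v:[0,\varepsilon]\to M$ emanating from $p$ with initial directions $u$ and $v$, parametrised by arclength. The directional derivative hypothesis yields
\[
f(\gamma_u(t))-f(p) = t+o(t),\qquad f(\gamma_v(t))-f(p) = -t+o(t),
\]
as $t\to 0+$. Subtracting these and inserting the $1$-Lipschitz property of $f$,
\[
2t+o(t)\;=\;f(\gamma_u(t))-f(\gamma_v(t))\;\le\; d(\gamma_u(t),\gamma_v(t))\;\le\; 2t,
\]
where the upper bound is the ordinary triangle inequality through $p$. Hence $d(\gamma_u(t),\gamma_v(t))=2t-o(t)$.

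Next I translate this length estimate into an angle statement. Let $\tilde\theta(t):=\tilde\angle_0\,\gamma_u(t)p\gamma_v(t)$ denote the Euclidean comparison angle, which is well-defined for all sufficiently small $t$. The planar law of cosines in the isosceles triangle with two legs of length $t$ gives $d(\gamma_u(t),\gamma_v(t))=2t\sin(\tilde\theta(t)/2)$, so the previous step forces $\sin(\tilde\theta(t)/2)\to 1$ and hence $\tilde\theta(t)\to\pi$ as $t\to 0+$.

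Finally, since $M$ has curvature bounded below locally, some curvature lower bound $\kappa$ is valid on a neighbourhood of $p$, and by the definition of angle in an Alexandrov space,
\[
\angle(u,v)\;=\;\lim_{t\to 0+}\tilde\angle_\kappa\,\gamma_u(t)p\gamma_v(t).
\]
Because comparison angles with respect to different reference curvatures share the same limit as the triangle collapses to $p$, the right-hand side coincides with $\lim_{t\to 0+}\tilde\theta(t)=\pi$. The argument presents no real obstacle; the only point requiring minor care is to make sure that the directional derivative hypothesis really supplies geodesic representatives of $u$ and $v$ along which $f$ has the claimed first-order behaviour, which is immediate from the definition of directional derivative along geodesics.
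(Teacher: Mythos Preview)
Your proof is correct and follows essentially the same route as the paper's: both compare $f(\gamma_u(t))-f(\gamma_v(t))$ against $d(\gamma_u(t),\gamma_v(t))$ via the $1$-Lipschitz condition to force $d(\gamma_u(t),\gamma_v(t))/t\to 2$, whence the comparison angle tends to $\pi$. The only cosmetic difference is that the paper, rather than fixing geodesics with directions exactly $u,v$, chooses points $x_t,y_t$ at distance $t$ from $p$ whose geodesic directions merely \emph{converge} to $u$ and $v$ as $t\to 0$; this sidesteps the issue you flag in your last paragraph about whether $u,v$ are themselves geodesic directions in $\Sigma_pM$.
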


\begin{proof}
  There are points $x_t, y_t \in M$, $t > 0$, such that
  $d(p,x_t) = d(p,y_t) = t$ for all $t > 0$ and that
  the direction at $p$ of $px_t$ (resp.~$py_t$)
  converges to $u$ (resp.~$v$) as $t \to 0$.
  The assumption for $f$ tells us that
  \[
  \lim_{t \to 0} \frac{f(x_t) - f(p)}{t} = 1 \quad\text{and}\quad
  \lim_{t \to 0} \frac{f(y_t) - f(p)}{t} = -1,
  \]
  which imply
  \[
  \lim_{t \to 0} \frac{d(x_t,y_t)}t \ge
  \lim_{t \to 0} \frac{f(x_t) - f(y_t)}{t} = 2.
  \]
  This completes the proof.
\end{proof}

\begin{lem} \label{lem:bfunc}
  Assume that a ray $\sigma : [\,0,\,+\infty\,) \to M$ is asymptotic
  to a ray $\gamma : [\,0,\,+\infty\,) \to M$, and let $s$ be a given
  positive number.
  
  {\rm(1)} If $\sigma(s)$ is a non-singular point, then
  $b_\gamma$ is differentiable at $\sigma(s)$ and
  $\nabla b_\gamma(\sigma(s))$ is tangent to $\sigma$.
  
  {\rm(2)} Among all rays emanating from $\sigma(s)$,
  only the subray $\sigma|_{[\,s,+\infty\,)}$ of $\sigma$
  is asymptotic to $\gamma$.
\end{lem}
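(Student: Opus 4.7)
The plan is to treat the two assertions in sequence, relying throughout on the fact that $b_\gamma$ is $1$-Lipschitz and locally semi-concave on $M$: this follows from writing $b_\gamma$ as the locally uniform limit, as $t \to +\infty$, of the functions $x \mapsto t - d(x, \gamma(t))$, each of which is semi-concave away from $\gamma(t)$ with semi-concavity constant controlled on any fixed compact subset.

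For (1), set $p := \sigma(s)$ and let $u := \sigma'(s) \in \Sigma_pM$ denote the forward unit tangent to $\sigma$ at $p$. Specializing \eqref{eq:blinear} to $\sigma$ itself gives $b_\gamma(\sigma(s+h)) - b_\gamma(p) = h$ for $h \ge 0$, so the one-sided directional derivative satisfies $(db_\gamma)_p(u) = +1$. Because $p$ is non-singular, Theorem \ref{thm:str}(3) identifies $K_pM$ isometrically with $\R^n$, so a chart around $p$ presents $b_\gamma$ as a $1$-Lipschitz, locally semi-concave function on an open subset of $\R^n$; its superdifferential $\partial^+ b_\gamma(p) \subset K_pM$ is then non-empty, compact and convex, and satisfies $(db_\gamma)_p(v) = \inf\{\,\langle\xi, v\rangle : \xi \in \partial^+ b_\gamma(p)\,\}$. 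The $1$-Lipschitz condition gives $\|\xi\| \le 1$ for every such $\xi$; combining this with $(db_\gamma)_p(u) = 1$ and Cauchy--Schwarz forces $\xi = u$. Thus $\partial^+ b_\gamma(p) = \{u\}$, so $b_\gamma$ is differentiable at $p$ with $\nabla b_\gamma(p) = u$, which is tangent to $\sigma$.

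For (2), I would argue without invoking non-singularity of $\sigma(s)$. Let $\tau : [\,0, +\infty\,) \to M$ be any ray emanating from $p = \sigma(s)$ asymptotic to $\gamma$, and form the concatenation $\bar\sigma : [\,0, +\infty\,) \to M$ defined by $\bar\sigma|_{[\,0,s\,]} = \sigma|_{[\,0,s\,]}$ and $\bar\sigma(s+r) = \tau(r)$ for $r \ge 0$. For $0 \le r_1 \le s \le r_2$, applying \eqref{eq:blinear} to both $\sigma$ and $\tau$ and using $\tau(0) = \sigma(s)$ yields
\[
b_\gamma(\bar\sigma(r_2)) - b_\gamma(\bar\sigma(r_1)) = r_2 - r_1,
\]
so the $1$-Lipschitz property of $b_\gamma$ forces $d(\bar\sigma(r_1), \bar\sigma(r_2)) \ge r_2 - r_1$, while the triangle inequality gives the reverse bound; hence $\bar\sigma$ is itself a ray. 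Now $\sigma$ and $\bar\sigma$ are two rays emanating from $\sigma(0)$ that agree on $[\,0, s\,]$, so by the non-branching of minimizing geodesics in Alexandrov spaces they coincide on $[\,0, +\infty\,)$, whence $\tau = \sigma|_{[\,s, +\infty\,)}$. The main obstacle is the collapsing of $\partial^+ b_\gamma(p)$ to a singleton in (1): this step uses both the Euclidean structure of $K_pM$ at the non-singular point $p$ and the genuine semi-concavity of $b_\gamma$, not merely its $1$-Lipschitz continuity. Part (2), by contrast, is a formal consequence of \eqref{eq:blinear} together with non-branching.
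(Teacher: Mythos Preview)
Your argument for (2) is correct and in fact more direct than the paper's. The paper invokes Lemma~\ref{lem:diff}: since the directional derivative of $b_\gamma$ at $\sigma(s)$ equals $+1$ along $\tau'(0)$ and $-1$ along the direction of $\sigma|_{[0,s]}$, the angle between these is $\pi$, so $\tau'(0)=\sigma'(s)$ and non-branching finishes. Your concatenation argument reaches the same conclusion using only \eqref{eq:blinear}, the $1$-Lipschitz bound, and non-branching, without passing through the space of directions at all.

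Your argument for (1), however, has a sign error that breaks it as written. In an Alexandrov space of curvature bounded below, $r_q=d(q,\cdot)$ is semi\emph{concave} (this is exactly what the triangle comparison gives), so $t-r_{\gamma(t)}$ and hence $b_\gamma$ is semi\emph{convex}, not semiconcave. The relevant object is therefore the \emph{sub}differential $\partial^-b_\gamma(p)$, and for every $\xi$ in it one has $\langle\xi,v\rangle\le (db_\gamma)_p(v)$, not the reverse. From $(db_\gamma)_p(u)=1$ together with $\|\xi\|\le 1$ you then get nothing new: $\langle\xi,u\rangle\le 1$ is automatic, and $\partial^-b_\gamma(p)$ need not collapse. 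The repair is to also use the backward derivative $(db_\gamma)_p(-u)=-1$, which is available precisely because $s>0$: every $\xi\in\partial^-b_\gamma(p)$ then satisfies $\langle\xi,-u\rangle\le -1$, hence $\langle\xi,u\rangle\ge 1$, and now Cauchy--Schwarz forces $\xi=u$. (In effect you end up reproving Lemma~\ref{lem:diff} at a non-singular point.) The paper itself does not argue via semiconvexity for (1); it simply cites the classical Riemannian argument in \cite{SST:totcurv} together with the differentiability of distance functions from \cite{OS:rstralex}.
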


\begin{proof}
  (1) follows from the same discussion as for Riemannian manifolds
  (see \cite[Theorem 3.8.2]{SST:totcurv}), in which
  we need the total differentiability
  of the distance function from a compact subset of $M$.
  This is obtained in the same way as in
  \cite[Theorem 3.5 and Lemma 4.1]{OS:rstralex}.
  
  (2): Take any ray $\tau$ from $\sigma(s)$ asymptotic to $\gamma$.
  By \eqref{eq:blinear}, the derivative of $b_\gamma\circ\tau$ is
  equal to $1$.  Therefore, using Lemma \ref{lem:diff} yields
  that the angle between $\sigma|_{[0,s]}$ and $\tau$ is equal to $ 
  \pi$,
  so that $\sigma'(s) = \tau'(0)$.
  This completes the proof.
\end{proof}

Note that if $\sigma(s)$ is a non-singular point, then
Lemma \ref{lem:bfunc}(1) implies (2).

\begin{lem} \label{lem:bsplitting}
  Let $\gamma$ be a straight line in $M$.
  Denote by $b_+$ the Busemann function for
  $\gamma_+ := \gamma|_{[\,0,+\infty\,)}$ and by $b_-$
  that for $\gamma_- := \gamma|_{(\,-\infty,0\,]}$.
  If $b_+ + b_- \equiv 0$ holds, then
  $M$ is covered by disjoint straight lines bi-asymptotic to $\gamma$.
  In particular, $b_+^{-1}(t)$ for all $t \in \R$ are homeomorphic
  to each other and $M$ is homeomorphic to $b_+^{-1}(t) \times \R$.
\end{lem}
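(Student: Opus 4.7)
The plan is to follow the Cheeger-Gromoll strategy: construct, through each $x \in M$, a unique straight line bi-asymptotic to $\gamma$, and then use these lines to produce the product structure. Fix $x \in M$ and choose rays $\sigma_\pm : [\,0, +\infty\,) \to M$ from $x$ asymptotic to $\gamma_\pm$ (existence is noted right after the definition of the asymptotic relation). By \eqref{eq:blinear}, $b_\pm(\sigma_\pm(s)) = s + b_\pm(x)$, and using the hypothesis $b_+ = -b_-$ one obtains $b_+(\sigma_+(s)) = b_+(x) + s$ and $b_+(\sigma_-(s)) = b_+(x) - s$. Hence the directional derivatives of the $1$-Lipschitz function $b_+$ along $\sigma_+$ and along $\sigma_-$ at $x$ are $+1$ and $-1$, so Lemma \ref{lem:diff} forces the angle between their initial directions to be $\pi$. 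For any $s, t > 0$, the triangle inequality gives $d(\sigma_-(s), \sigma_+(t)) \le s + t$, while the $1$-Lipschitz property of $b_+$ yields $d(\sigma_-(s), \sigma_+(t)) \ge b_+(\sigma_+(t)) - b_+(\sigma_-(s)) = s + t$; this equality shows that the concatenation $\ell_x$ of the reversal of $\sigma_-$ with $\sigma_+$ is a straight line through $x$ bi-asymptotic to $\gamma$.

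Next I would prove uniqueness of the line through $x$. Suppose lines $\ell^1, \ell^2$ both pass through $x$, parameterized so that $\ell^i(0) = x$ and $\ell^i$ is asymptotic to $\gamma_+$ as $s \to +\infty$. For fixed $s_0 > 0$, the concatenation of $\ell^1|_{[-s_0, 0]}$ with $\ell^2|_{[0, +\infty)}$ is globally distance-realizing (again by the Busemann lower bound for $b_+$), and is therefore a ray from $\ell^1(-s_0)$ asymptotic to $\gamma_+$. Since $\ell^1(-s_0)$ is an interior point of the asymptotic ray $\ell^1|_{[-2s_0, +\infty)}$, Lemma \ref{lem:bfunc}(2) forces this concatenated ray to coincide with $\ell^1|_{[-s_0, +\infty)}$, and hence $\ell^2|_{[0, +\infty)} = \ell^1|_{[0, +\infty)}$. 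A symmetric argument handles the negative halves, so $\ell^1 = \ell^2$ and the family $\{\ell_x\}_{x \in M}$ partitions $M$ into straight lines bi-asymptotic to $\gamma$.

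To finish, I would define $F : M \to b_+^{-1}(0) \times \R$ by $F(x) := (\ell_x(-b_+(x)), b_+(x))$ with candidate inverse $G(y, t) := \ell_y(t)$; uniqueness of $\ell_x$ combined with the identity $b_+(\ell_x(s)) = b_+(x) + s$ shows these are mutually inverse. Continuity of $b_+$ is clear, and for continuity of $x \mapsto \ell_x$ on bounded parameter intervals I would invoke properness of $M$ and the Arzel\`a-Ascoli theorem: any subsequential limit of $\ell_{x_n}$ for $x_n \to x$ is a straight line through $x$ bi-asymptotic to $\gamma$, which by the uniqueness just established must equal $\ell_x$, so the full sequence converges. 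The same compactness argument yields continuity of $G$, and restricting $F$ to $b_+^{-1}(t)$ shows that all level sets are homeomorphic to $b_+^{-1}(0)$. The main obstacle is the uniqueness step: in the Alexandrov setting a given point may admit several asymptotic rays to a given end, so without the rigidity imposed by $b_+ + b_- \equiv 0$ (which couples both sides into a single straight line and lets one appeal to Lemma \ref{lem:bfunc}(2) at an interior point of a pre-existing ray) one could not collapse competing lines through $x$ onto one.
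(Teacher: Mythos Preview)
Your argument is correct and follows essentially the same route as the paper: build a bi-asymptotic straight line through each point, prove uniqueness so that these lines partition $M$, and read off the product structure (you give the homeomorphism more explicitly than the paper does).  The only real variation is in the construction of $\ell_x$: you pick two rays $\sigma_\pm$ and use the $1$-Lipschitz bound $d(\sigma_-(s),\sigma_+(t))\ge b_+(\sigma_+(t))-b_+(\sigma_-(s))=s+t$ to see directly that their concatenation is globally minimizing, whereas the paper takes only a ray $\sigma$ asymptotic to $\gamma_+$ and argues via Lemma~\ref{lem:diff} that any ray asymptotic to $\gamma_-$ from an interior point $\sigma(s)$ must run back along $\sigma|_{[0,s]}$, thereby extending $\sigma$ to a full line.

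One small point in your uniqueness step: the claim that the concatenated ray, and that $\ell^1|_{[-2s_0,+\infty)}$, are \emph{asymptotic to $\gamma_+$} in the sense of the definition is not immediate (being an extension of an asymptotic ray is not a priori the same as being asymptotic).  You can bypass this entirely: once you know the concatenation of $\ell^1|_{[-s_0,0]}$ with $\ell^2|_{[0,+\infty)}$ is a minimal ray, it agrees with the minimal ray $\ell^1|_{[-s_0,+\infty)}$ on the nondegenerate segment $[-s_0,0]$, so non-branching of geodesics in an Alexandrov space gives $\ell^2|_{[0,+\infty)}=\ell^1|_{[0,+\infty)}$ directly, without invoking Lemma~\ref{lem:bfunc}(2).
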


\begin{proof}
  Take any point $p \in M$ and a ray $\sigma : [\,0,+\infty\,) \to M$
  from $p$ asymptotic to $\gamma_+$.
  For any $s > 0$, the directional derivatives of $b_+$ to the two
  opposite directions at $\sigma(s)$ tangent to $\sigma$ are
  $-1$ and $1$, respectively.
  Since $b_- = -b_+$ and by Lemma \ref{lem:diff},
  a ray from $\sigma(s)$ asymptotic to $\gamma_-$ is unique
  and contains $\sigma([\,0,s\,])$.
  By the arbitrariness of $s > 0$,
  $\sigma$ extends to a straight line bi-asymptotic to $\gamma$.
  Namely, for a given point $p \in M$, we have a straight line
  $\sigma_p$ passing through $p$ and bi-asymptotic to $\gamma$.
  By Lemma \ref{lem:bfunc}(2), any ray from a point in $\sigma_p$
  asymptotic to $\gamma_\pm$ is a subray of $\sigma_p$.
  In particular, $\sigma_p$ is unique (up to parameters) for a given  
  $p$,
  and for any two points $p,q \in M$
  the images of $\sigma_p$ and $\sigma_q$ either coincide or
  do not intersect each other.
  $M$ is covered by $\{\sigma_p\}_{p \in M}$ and
  this completes the proof.
\end{proof}

\begin{lem} \label{lem:bsubh}
  Let $\mu$ be a positive Radon measure on $M$ with full support
  and let $\Omega \subset M$ be an open subset.
  Assume that $\mu$ satisfies $\BG(0,N_\Omega)$ on $\Omega$
  for a real number $N_\Omega \ge 1$.
  Then, the Busemann function $b_\gamma$ for any ray $\gamma$ in $M$
  is $\mu$-subharmonic on $\Omega$
  in the sense of Definition {\rm\ref{defn:subh}}.
\end{lem}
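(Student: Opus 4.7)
The plan is to approximate $b_\gamma$ by the functions $b_t(x) := t - r_{\gamma(t)}(x)$, apply the Laplacian Comparison Theorem (Theorem \ref{thm:LapComp}) to $r_{\gamma(t)}$ for each large $t$, and then pass to the limit $t \to +\infty$. Since $b_\gamma$ and each $b_t$ are $1$-Lipschitz, Lemma \ref{lem:W-Lip} places them in $W^{1,2}_{0,\loc}(\Omega;\mu) \cap C(\Omega)$, and their gradients have norm $\le 1$ $\mu$-a.e.\ by Proposition \ref{prop:Rad} and Lemma \ref{lem:muzero}. Fix a nonnegative $f \in C_0^{\Lip}(\Omega)$. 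For $t$ sufficiently large one has $\gamma(t) \notin \supp f$, so $f \in C_0^{\Lip}(\Omega \setminus \{\gamma(t)\})$, and Theorem \ref{thm:LapComp} with $\kappa = 0$ (so that $\cot_0(r) = 1/r$), together with $\nabla b_t = -\nabla r_{\gamma(t)}$, yields
\[
\int_\Omega \langle \nabla b_t, \nabla f \rangle \, d\mu
\le (N_\Omega - 1) \int_\Omega \frac{f}{r_{\gamma(t)}} \, d\mu.
\]

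I claim both that the right-hand side tends to $0$ and that the left-hand side tends to $\int_\Omega \langle \nabla b_\gamma, \nabla f \rangle \, d\mu$; together these yield the desired $\mu$-subharmonicity. The first assertion is immediate: for $x \in \supp f$ one has $r_{\gamma(t)}(x) \ge t - d(\gamma(0),x)$, which grows to $+\infty$ uniformly in $x$, while $f \in L^1(\Omega;\mu)$. For the second assertion it suffices to establish the pointwise convergence $\nabla b_t \to \nabla b_\gamma$ $\mu$-a.e.\ on $\Omega$, since then dominated convergence applies with dominating function $|\nabla f|$ (bounded and compactly supported).

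The heart of the proof, and where I expect the main difficulty to lie, is this pointwise gradient convergence. Fix a non-singular point $x \in \Omega$ at which $b_\gamma$ is differentiable and which is not a cut point of $\gamma(t)$ for the relevant $t$'s; Lemmas \ref{lem:muzero} and \ref{lem:cutmuzero} together with Proposition \ref{prop:Rad} guarantee that the set of exceptional $x$ is $\mu$-null. At such an $x$, $-\nabla b_t(x) = \nabla r_{\gamma(t)}(x)$ is the unit vector opposite the initial direction of the unique minimal geodesic $\sigma_t$ from $x$ to $\gamma(t)$. By definition of asymptotic relation, any subsequential limit of the $\sigma_t$ is a ray from $x$ asymptotic to $\gamma$, and by Lemma \ref{lem:bfunc}(1) the initial direction of any such limiting ray is $\nabla b_\gamma(x)$. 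Hence every subsequential limit of $\nabla b_t(x)$ equals $\nabla b_\gamma(x)$, so the full sequence converges and the argument concludes.
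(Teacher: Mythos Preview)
Your proof is correct and follows essentially the same route as the paper's: approximate $\nabla b_\gamma$ pointwise by $-\nabla r_{\gamma(t_i)}$, apply the Laplacian comparison (Theorem~\ref{thm:LapComp}) with $\kappa=0$, and pass to the limit by dominated convergence. One small point: to ensure the exceptional set of $x$ is $\mu$-null you should work along a fixed sequence $t_i \to +\infty$ (as the paper does), since Lemma~\ref{lem:cutmuzero} controls $\Cut_p$ only for a single $p$ and a \emph{countable} union of null sets is null, whereas requiring $x \notin \Cut_{\gamma(t)}$ for all large $t$ is an uncountable condition.
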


\begin{proof}
  We take a sequence $t_i \to +\infty$, $i=1,2,\dots$.
  Since $r_{\gamma(t_i)}$, $b_\gamma$ are both Lipschitz,
  they are
  differentiable $\mu$-a.e.~on $\Omega$ by Proposition \ref{prop:Rad}
  and Remark \ref{rem:Rad}.
  Let $x \in \Omega$ be any non-singular point where $r_{\gamma(t_i)}$  
  and $b_\gamma$ are all differentiable.
  We have a unique minimal geodesic $\sigma_{x,i}$ from $x$ to
  $\gamma(t_i)$ and $-\nabla r_{\gamma(t_i)}(x)$ is tangent to it.
  A ray $\sigma_x$ from $x$ asymptotic to $\gamma$ is unique
  and $\nabla b_\gamma(x)$ is tangent to it.
  Since $\sigma_{x,i} \to \sigma_x$ as $i \to \infty$,
  we have $-\nabla r_{\gamma(t_i)}(x) \to \nabla b_\gamma(x)$.
  For any nonnegative function $f \in C_0^{\Lip}(\Omega)$,
  the dominated convergence theorem
  and Laplacian Comparison Theorem (Theorem \ref{thm:LapComp})
  show that
  \begin{align*}
    \int_\Omega \langle \nabla b_\gamma,\nabla f\rangle \; d\mu
    &= -\lim_{i\to\infty} \int_\Omega \langle \nabla r_{\gamma(t_i)}, 
    \nabla f\rangle
    \; d\mu\\
    &\le (N_\Omega-1) \lim_{i\to\infty} \int_{\supp f}
    \frac{f}{r_{\gamma(t_i)}} \; d\mu = 0,
  \end{align*}
  where $\supp f$ is the support of $f$.
  This completes the proof.
\end{proof}

\begin{proof}[Proof of Theorem \ref{thm:splitting}]
  Let $\Omega \subset M$ be any connected, relatively compact and open
  subset.
  The assumption of the theorem yields that
  there is a number $N_\Omega \ge 1$
  such that $\mu$ satisfies $\BG(0,N_\Omega)$ on a
  neighborhood of $\bar\Omega$.
  By Lemma \ref{lem:bsubh},
  $b := b_+ + b_-$ is $\mu$-subharmonic on $\Omega$, where
  $b_+$ and $b_-$ are the Busemann functions as in
  Lemma \ref{lem:bsplitting} for a straight line $\gamma$ in $M$.
  It follows from the triangle inequality that $b \le 0$.
  We have $b \circ \gamma \equiv 0$ by the definition of $b_\pm$.
  The maximum principle (Theorem \ref{thm:maxprin}) proves that
  $b \equiv 0$ on $\Omega$ if $\Omega$ intersects $\gamma$.
  By the aribitrariness of such $\Omega$,
  we have $b \equiv 0$ on $M$.
  Lemma \ref{lem:bsplitting} proves the theorem.
\end{proof}

\begin{proof}[Proof of Corollaries \ref{cor:splitting}
  and \ref{cor:splitting-BE}]
  Let $M$, $N$ and $V$ be as in Corollary \ref{cor:splitting-BE}.
  In Corollary \ref{cor:splitting}, we assume that
  $N = n$ and $V = 1$, in which case we have $\Ric_{N,\mu} = \Ric$
  and $\Delta_\mu = \Delta + \nabla V = \Delta$.
  For the corollaries, it suffices to prove
  that if $M$ contains a straight line, then $M$ is isometric to $M'
  \times \R$ and $V$ is constant on $\{x\} \times \R$ for each $x \in
  M'$.  This is because if $M' \times \R$ is a singular Riemannian
  space, then so is $M'$.
  
  We first assume that $N < +\infty$.
  Since any geodesic joining two points in
  $M \setminus S_M$ is contained in $M \setminus S_M$
  (see Lemma \ref{lem:geod}),
  the condition $\Ric_{N,\mu} \ge 0$ on $M \setminus S_M$ implies
  $\BG(0,N)$ for $\mu$ on $M \setminus S_M$
  (see \cite{BQ:volcomp} and also \cite{St:geom,LV:Ricmm}).
  By $\Hm^n(S_M) = 0$,
  $\mu$ satisfies $\BG(0,N)$ on $M$ (an easy discussion proves that
  for any convergent sequence $p_i \to p_\infty$ in $M$, $\BG(0,N)$
  for $p = p_i$ implies $\BG(0,N)$ for $p = p_\infty$).
  We then apply Theorem \ref{thm:splitting} to $M$ and $\mu$ for
  $N_\Omega = N$.
  In the proof of the theorem, we obtain that $b_+$ and $b_-$ are both
  $\mu$-subharmonic and $b_+ + b_- = 0$.
  Therefore, $b_{\pm}$ is $\mu$-harmonic, i.e.,
  a weak solution of $\Delta_\mu b_{\pm} = 0$ on $M \setminus S_M$.
  By the regularity theorem of elliptic differential equations,
  $b_+$ is of $C^2$ on $M \setminus S_M$ and satisfies
  $\Delta_\mu b_+ = 0$ pointwise on $M \setminus S_M$.
  We use the generalized Weitzenb\"ock formula for $\Ric_{N,\mu}$:
  \begin{align*}
    & -\Delta_\mu\Bigl(\frac{\|\nabla f\|^2}{2}\Bigr)
    + \langle\nabla \Delta_\mu f,\nabla f\rangle\\
    &= \frac{(\Delta_\mu f)^2}{N} + \Ric_{N,\mu}(\nabla f,\nabla f)
    + \Bigl\| \Hess f + \frac{\Delta f}{n}I_n \Bigr\|_{HS}^2\\
    &\quad + \frac{n}{N(N-n)} \Bigl( -\frac{N-n}{n}\Delta f
    + \langle\nabla V,\nabla f\rangle \Bigr)^2
  \end{align*}
  for any $C^2$ function $f : M \setminus S_M \to \R$
  (see \cite[(14.46)]{V:oldnew}),
  where $I_n$ denotes the identity operator and $\|\cdot\|_{HS}$ the
  Hilbert-Schmidt norm.
  Since $\|\nabla b_+\| = 1$ and
  $\Ric_{N,\mu}(\nabla b_+,\nabla b_+) \ge 0$,
  putting $f := b_+$ in the above formula yields that
  $\Hess b_+ = -(\Delta b_+/n)I_n$ and
  $((N-n)/n)\Delta b_+ =
  \langle\nabla V,\nabla b_+\rangle$.
  Since
  \[
  0 = \Delta_\mu b_+ = \Delta b_+ + \langle\nabla V,\nabla b_+ \rangle
  = \frac{N}{n}\Delta b_+,
  \]
  we have $\Hess b_+ = 0$ and $\langle\nabla V,\nabla b_+\rangle = 0$
  on $M \setminus S_M$.
  Thus, $b_+$ is a linear function along any geodesic in $M  
  \setminus S_M$.
  Since any geodesic segments in $M$ can be approximated by
  geodesic segments in $M \setminus S_M$,
  $b_+$ is linear along any geodesic in $M$.
  Since $M$ is covered by straight lines bi-asymptotic to $\gamma$,
  $b_+$ is averaged $D^2$ in the sense of \cite{Mk:splitting}.
  The isometric splitting follows from
  \cite[Theorem A]{Mk:splitting}.
  Since $\langle\nabla V,\nabla b_+\rangle = 0$ on $M \setminus S_M$,
  $V$ is constant along each straight line bi-asymptotic to
  $\gamma$.  This proves the corollaries in the case of $N < +\infty$.
  
  We next consider the case where $N = +\infty$.
  By $\Ric_{\infty,\mu} \ge 0$, the same discussion as in \cite 
  {Lch:BE},
  \cite[(1)]{FLZ:splitting} and \cite[(2.21)]{WW:comp-BE}
  leads to
  \begin{align*}
    \Delta_\mu r_p(x) &\ge -\frac{n-1}{r_p(x)} + \frac{2V(x)}{r_p(x)}
    -\frac{2}{r_p(x)^2} \int_0^{r_p(x)} V(\gamma(s))\;ds\\
    &\ge -\frac{n-1+2\sup_M V - 2V(x)}{r_p(x)}
  \end{align*}
  for any $p \in M$ and $x \in M \setminus (S_M \cup \Cut_p \cup
  \{p\})$,
  where $\gamma$ is a unique unit speed geodesic joining $p$ to $x$.
  Therefore, for a given compact subset $\Omega \subset M$,
  setting $N_\Omega := n+2\sup_M V - 2\inf_\Omega V$,
  we have
  \begin{align*}
    \Delta_\mu r_p \ge -\frac{N_\Omega-1}{r_p}
  \end{align*}
  on $\Omega \setminus (S_M \cup \Cut_p \cup \{p\})$, which together  
  with
  a standard discussion (cf.~the proof of \cite[Theorem 3.2]{Ota:mcp})
  yields $\BG(0,N_\Omega)$ on $\Omega$ for $\mu$.
  (Also, we directly obtain the Laplacian comparison
  (Theorem \ref{thm:LapComp}) as was stated under \eqref{eq:LapComp-ptw}).
  By applying Theorem \ref{thm:splitting},
  $M$ splits as $M' \times \R$ homeomorphically.
  We have $\Delta_\mu b_+ = 0$ on $M \setminus S_M$.
  Apply the generalized Weitzenb\"ock formula for $\Ric_{\infty,\mu}$:
  \[
  -\Delta_\mu\Bigl(\frac{\|\nabla f\|^2}{2}\Bigr) + \langle\nabla  
  \Delta_\mu f,\nabla f\rangle
  = \Ric_{\infty,\mu}(\nabla f,\nabla f) + \| \Hess f \|_{HS}^2.
  \]
  By setting $f := b_+$, the left-hand side vanishes, so that,
  by $\Ric_{\infty,\mu} \ge 0$, we have
  $\Ric_{\infty,\mu}(\nabla b_+,\nabla b_+) = 0$ and $\Hess b_+ = 0$
  on $M \setminus S_M$.
  In the same way as in the case of $N < +\infty$,
  we obtain that $M$ is isometric to $M' \times \R$.
  Since $\Ric(\nabla b_+,\nabla b_+) = 0$,
  we have
  \[
  0 = \Ric_{\infty,\mu}(\nabla b_+,\nabla b_+)
  = \Hess V(\nabla b_+,\nabla b_+)
  = \frac{\partial^2}{\partial t^2}V
  \]
  in the coordinate $(x,t) \in M' \times \R = M$, so that
  $V$ is linear along each line $\{x\} \times \R$, $x \in M'$.
  Since $V$ is bounded,
  it is constant along each line $\{x\} \times \R$, $x \in M'$.
\end{proof}


\def\cprime{$'$}
\providecommand{\bysame}{\leavevmode\hbox to3em{\hrulefill}\thinspace}
\providecommand{\MR}{\relax\ifhmode\unskip\space\fi MR }
\providecommand{\MRhref}[2]{%
  \href{http://www.ams.org/mathscinet-getitem?mr=#1}{#2}
}
\providecommand{\href}[2]{#2}

\end{document}